\DeclareMathOperator{\LC}{LC}
\DeclareMathOperator{\LM}{LM}
\DeclareMathOperator{\NF}{NF}
\DeclareMathOperator{\Mat}{Mat}
\DeclareMathOperator{\lcm}{lcm}
\DeclareMathOperator{\Syz}{Syz}
\DeclareMathOperator{\Rad}{Rad}
\newcommand\ssum{\textstyle\sum\limits}
\newcommand\Z{\mathbb{Z}}
\let\rho=\varrho
\let\epsilon=\varepsilon
\title{Computing the Unit Group of a Commutative Finite \texorpdfstring{$\mathbb{Z}$}{Z}-Algebra}
\author{Martin Kreuzer}
\address[Martin Kreuzer]{Fakult\"{a}t f\"{u}r Informatik und Mathematik \\
Universit\"{a}t Passau, D-94032 Passau, Germany}
\email{martin.kreuzer@uni-passau.de}
\author{Florian Walsh}
\address[Florian Walsh]{Fakult\"{a}t f\"{u}r Informatik und Mathematik \\
Universit\"{a}t Passau, D-94032 Passau, Germany}
\email{florian.walsh@uni-passau.de}
\thanks{2020 \textit{Mathematics Subject Classification.} 68W30, 20F05, 13P99, 16Z05.}
\begin{document}

\begin{abstract}
For a commutative finite $\mathbb{Z}$-algebra, i.e., for a commutative ring $R$ whose additive group
is finitely generated, it is known that the group of units of $R$ is finitely generated, as well.
Our main results are algorithms to compute generators and the structure of this group.
This is achieved by reducing the task first to the case of reduced rings, then to torsion-free reduced rings,
and finally to an order in a reduced ring. The simplified cases are treated via a calculation of 
exponent lattices and various algorithms to compute the minimal primes, primitive idempotents, and other basic
objects. All algorithms have been implemented and are available as a {\tt SageMath} package. Whenever possible,
the time complexity of the described methods is tracked carefully.  
\end{abstract}

\keywords{finite Z-algebra, unit group, exponent lattice, explicitly given algebra}

\maketitle

%%%%%%%%%%%%%%%%%%%%%%%%%%%%%%%%%%%%%
%
%  Section 1: Introduction
%
%%%%%%%%%%%%%%%%%%%%%%%%%%%%%%%%%%%%%

\section{Introduction}

In the study of the structure of a commutative ring, one important aspect is its group of units.
A famous result in this direction dating back to 1846 is L.G.\ Dirichlet's unit theorem (see~\cite{dir1846})
which says that the group of units of the ring of integers of a number field is a finitely generated
abelian group. Much later, in 1972, this was generalized to orders in such rings by H.\ Zassenhaus (see~\cite{zas1972}).
With the advance of computer algebra, the computation of an actual system of generators of such a
unit group and its set of relations have become feasible, and algorithms achieving these tasks
have been developed (see, for example, \cite{cohen1997subexponential, biasse2014subexponential}).
Also for other types of rings, for which the group of units is known to be finitely generated,
explicit algorithms for computing their generators or their presentations have been described,
including for orders in (not necessarily commutative) finite dimensional $\mathbb{Q}$-algebras
(see~\cite{braun2015computing}), for integral group rings over finite abelian groups (see~\cite{faccin2013computing}), 
and for the affine coordinate rings of rational normal curves and elliptic curves (see \cite{chen2021computing}).

In this paper we improve on many of these results and consider the general case of a commutative finite $\Z$-algebra,
i.e., a commutative ring which is a finitely generated $\Z$-module. In~\cite{samuel1970algebraic}, P.\ Samuel 
proved that the unit groups of such rings are finitely generated. The main results of this paper are algorithms
for computing a system of generators of these unit groups, as well as for calculating their structure.

Let us describe the path we follow to reach these goals. After recalling some basic results about finite $\Z$-algebras
in Section~2, we devise algorithms for computing {\it exponent lattices} in such a ring~$R$, i.e.,
lattices of the type $\Lambda = \{ (a_1,\dots,a_k) \in \Z^k \mid f_1^{a_1} \cdots f_k^{a_k} = 1\}$
where $f_1,\dots,f_k\in R$. Using several techniques from our previous paper~\cite{kreuzer2024binomial},
we reduce the calculation of exponent lattices to the cases of 0-dimensional algebras over the fields~$\mathbb{Q}$
and~$\mathbb{F}_p$ with a prime~$p$. For this characteristic~$p$ part, we solve the problem first modulo~$p$
and then refine the answer modulo higher powers of~$p$ with a method resembling the well-known technique 
of Hensel lifting (see Proposition~\ref{prop:hensel_lifting}). 
Altogether, we obtain Algorithm~\ref{alg:exp_lattice_z-algebra}
and discuss several methods to improve its implementation (see Remark~\ref{rem:impl_exp_lattice}).

The next step is taken in Section~4 where we consider the case of a reduced finite $\Z$-algebra.
If the algebra is even integral, algorithms for computing its unit group are known (see Remark~\ref{remark:integral}).
The general case is treated by calculating the primitive idempotents and computing the unit
group of an order via reduction to the case of orders in number fields (see Lemma~\ref{lemma:order} 
and Algorithm~\ref{alg:unit_group_order}). This solves the torsion-free reduced case 
(see Corollary~\ref{alg:unit_group_torsion_free})
and allows us to deal with the general reduced case using a version of the Chinese Remainder Theorem
(see Lemma~\ref{lemma:reduction_to_torsion_free} and Algorithm~\ref{alg:units_reduced}).

Finally, in Section~5, we attack the general case of a finite $\Z$-algebra. The main additional
task is to find generators of $1+\Rad(0)$ (see Lemma~\ref{lemma:canonical_epi}). We provide two different solutions
(Algorithm~\ref{alg:unit_group_gens} and Lemma~\ref{lemma:rad_Macaulay}). All in all, 
we are able to compute a system of generators of the unit group of a finite $\Z$-algebra
(see Algorithm~\ref{alg:unit_group_gens}) and also its structure 
in terms of its rank and invariant factors (see Corollary~\ref{corollary:type}).

Throughout the paper we tried to keep track of the complexity of the presented algorithms.
First of all, this depends on the way the algebra~$R$ is given: either explicitly (via generators
and relations of $R^+$ plus the structure constants) or through a presentation $R=\Z[x_1,\dots,x_n]/I$
with an ideal~$I$ given by explicit generators. In the first case, many steps of the algorithms
can be performed in probabilistic polynomial time plus (possibly) one integer factorization.
In the second case, we may have to first compute a strong Gr\"obner basis to get going.
After we reduce everything to the case of an order in a number field, we have to rely on previous
work whose precise complexity estimates are apparently not known.

All algorithms in this paper are illustrated by explicit examples.
They were computed using an implementation by the second author in the software system
\texttt{SageMath}~\cite{sagemath}. The complete package is available freely from his GitHub
page~\cite{walsh2024binomial}. As for the basic definitions and notation, we adhere to
the terminology given in the books~\cite{kreuzer2000computational} and~\cite{kreuzer2005computational}.

\bigskip\bigbreak
%%%%%%%%%%%%%%%%%%%%%%%%%%%%%%%%%%%%%%%%%%%%%
%
%  Section 2: Preliminaries
%
%%%%%%%%%%%%%%%%%%%%%%%%%%%%%%%%%%%%%%%%%%%%%

\section{Preliminaries on Finite \texorpdfstring{$\Z$}{Z}-Algebras}

In this section we collect basic properties of finite $\mathbb{Z}$-algebras, i.e., $\mathbb{Z}$-algebras which
are finitely generated as a $\mathbb{Z}$-module. Given such an algebra~$R$, we denote its underlying
$\mathbb{Z}$-module by $R^+$. Subsequently, we assume that a $\Z$-algebra~$R$ is either given by an ideal~$I$ in
$P = \mathbb{Z}[x_1, \dots, x_n]$ such that $R=P/I$ or that it is given as follows.

\begin{rem}\label{remark:input}
A $\Z$-algebra $R$ is said to be \textbf{explicitly given} if it is given by the following information.
\begin{enumerate}
\item[(a)] A set of generators $\mathcal{G} = \{g_0, \dots, g_n\}$ of the $\Z$-module $R^+$, together with
a matrix $A = (a_{\ell k}) \in \Mat_{m,n+1}(\Z)$ whose rows generate the syzygy module
$\Syz_{\Z}(\mathcal{G})$ of $\mathcal{G}$.

\item[(b)] Structure constants $c_{ijk}\in\Z$ such that $g_i g_j = \sum_{k=0}^n c_{ijk} g_k$ for $i,j = 0, \dots, n$.
\end{enumerate}
Notice that we may assume $g_0 = 1$ and encode this information as an ideal
$$
I = \left\langle x_i x_j - \ssum_{k=0}^n c_{ijk} x_k, \; \ssum_{k=0}^n a_{\ell k} g_k
\mid i,j=1, \dots, n, \; \ell=1, \dots, m \right\rangle
$$
in $P = \Z[x_1, \dots, x_n]$ such that $R \cong P/I$.
\end{rem}

If $R=P/I$ is not explicitly given, then we can obtain an explicit representation from a strong Gr\"obner
basis of~$I$.

\begin{defi}
Given an ideal $I \subseteq P$ and a term ordering $\sigma$, a set of polynomials $G=\{g_1,\dots,g_r\}$ in~$I$
is called a \textbf{strong $\sigma$-Gr\"obner basis} of~$I$ if, for every non-zero polynomial $f\in I$,
there exists an index $i\in\{1,\dots,r\}$ such that $\LM_\sigma(f)$ is a multiple of~$\LM_\sigma(g_i)$.
\end{defi}

Strong Gr\"obner bases can be computed using a generalization of Buchberger's algorithm
(see for example \cite[Ch.~4]{adams1994introduction} or~\cite{eder2021standard}).
For some ideal-theoretic operations which can be performed effectively using
strong Gr\"obner bases, we refer to~\cite[Ch.~4]{adams1994introduction} and~\cite[Ch.~3]{kreuzer2000computational}.
Generators of the $\mathbb{Z}$-module~$R^+$ can be deduced from a strong Gr\"obner basis as follows.

\begin{prop}{\bf (Macaulay's Basis Theorem for Finite $\mathbb{Z}$-Algebras)}\label{lemma:macaulay}\ \\
Let $I \subseteq P$ be an ideal such that $P/I$ is a finite $\mathbb{Z}$-algebra, let~$\sigma$
be a term ordering on~$\mathbb{T}^n$, and let $L = \{m \in \LM_\sigma(I) \mid \LC_\sigma(m) = 1\}$ be the set of all monic
leading monomials of~$I$. Then the residue classes of the terms in
$\mathcal{O}_\sigma = \mathbb{T}^n \setminus L$ form a generating set of the $\mathbb{Z}$-module $P/I$.
\end{prop}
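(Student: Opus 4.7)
The plan is to show that for every $f \in P$ the residue class $f + I$ lies in the $\mathbb{Z}$-span of $\{t + I \mid t \in \mathcal{O}_\sigma\}$. Since $\sigma$ is a term ordering, it is a well-ordering on~$\mathbb{T}^n$, so I will proceed by Noetherian induction on $\LT_\sigma(f)$, taking $f = 0$ as the trivial base case. A strong $\sigma$-Gr\"obner basis of~$I$ is not needed for the existence argument itself, only for the computability of~$L$; the proof uses solely the definition of~$L$.

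For the inductive step, let $t = \LT_\sigma(f)$ and $c = \LC_\sigma(f)$ and write $f = c \cdot t + \tilde f$ where $\LT_\sigma(\tilde f) < t$. I distinguish two cases. If $t \in \mathcal{O}_\sigma$, the inductive hypothesis applied to~$\tilde f$ yields a polynomial~$h$ supported on~$\mathcal{O}_\sigma$ with $\tilde f \equiv h \pmod{I}$, and then $f \equiv c \cdot t + h \pmod{I}$ is again supported on~$\mathcal{O}_\sigma$. If instead $t \in L$, the defining property of~$L$ supplies a polynomial $g \in I$ with $\LT_\sigma(g) = t$ and $\LC_\sigma(g) = 1$; then $f - c g \in f + I$ has leading term strictly below~$t$, and the inductive hypothesis reduces $f - cg$ to an element supported on~$\mathcal{O}_\sigma$, which simultaneously represents $f + I$. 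The well-ordering of~$\sigma$ guarantees that this descent terminates.

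The crucial feature is that~$L$ is defined using leading monomials whose leading coefficient is exactly~$1$: this is what permits the $\mathbb{Z}$-linear combination $f - cg$ to kill the leading term of~$f$ precisely, and it is the only place where care is needed over~$\mathbb{Z}$ as opposed to a field. Terms of~$\mathbb{T}^n$ that occur as leading terms of elements of~$I$ only with coefficients strictly greater than~$1$ must remain in~$\mathcal{O}_\sigma$, which reflects the possible $\mathbb{Z}$-torsion of~$P/I$ and explains why one obtains a generating set rather than a free $\mathbb{Z}$-basis. I do not anticipate any serious obstacle; in particular, the hypothesis that~$P/I$ is a finite $\mathbb{Z}$-algebra is not used in the generation argument itself but only guarantees that $\mathcal{O}_\sigma$ is a finite set, so that the resulting generating system of $R^+$ is finite and thus usable in the subsequent algorithms.
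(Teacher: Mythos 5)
Your proof is correct. The paper itself gives no argument here, deferring to Proposition~6.6 of \cite{kreuzer2024efficient}; your Noetherian induction on $\LT_\sigma(f)$, splitting on whether $\LT_\sigma(f)$ lies in $\mathcal{O}_\sigma$ (peel off the leading term) or in $L$ (subtract $c\,g$ for a monic $g \in I$), is exactly the standard division-with-remainder argument one expects behind that citation, and your observations that monicity of the elements of $L$ is the only point where $\mathbb{Z}$ differs from a field, and that finiteness of $P/I$ is needed only for finiteness of $\mathcal{O}_\sigma$, are both accurate.
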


\begin{proof}
See Proposition~6.6 in \cite{kreuzer2024efficient}.
\end{proof}

Given generators of $R^+$ as in the preceding lemma, it is also possible to determine an explicit presentation
of~$R$ (see Algorithm 6.7 and Corollary 6.8 in \cite{kreuzer2024efficient}). From such a presentation
we can then determine the structure of $R^+$.

\begin{rem}
By the structure theorem for finitely generated modules over a principal ideal domain there exist
$r$ and $k_1, \dots, k_u$ in $\mathbb{N}$ such that $k_i$ divides $k_j$ for $i<j$ and such that
$$
R^+ \cong \mathbb{Z}^r \oplus \mathbb{Z}/k_1 \mathbb{Z} \oplus \cdots \oplus \mathbb{Z}/k_u \mathbb{Z}.
$$
The numbers $r$ and $k_1, \dots, k_u$ are uniquely determined by $R^+$. We call $r$ the \textbf{rank} and
$k_1, \dots, k_u$ the \textbf{invariant factors} of~$R^+$. The largest invariant factor $k_u$ is the exponent
of the torsion subgroup of $R^+$. We call it the \textbf{torsion exponent}~$\tau$ of~$R^+$.
\end{rem}

The rank, the invariant factors, and the torsion exponent can be determined using a Smith normal form computation
(for details we refer to Section~2 in~\cite{kreuzer2024efficient}). Algorithms which compute the Smith normal form
of an integer matrix can for example be found in~\cite{kannan1979polynomial} or~\cite{storjohann1996near}.
If an explicit presentation is given or has been determined from a strong Gr\"obner basis, many computations that
we need in the following sections can be performed efficiently, i.e., in (probabilistic) polynomial time in the bit
complexity of the input. More precisely, we have the following complexity results.

\begin{rem}\label{remark:complexity}
Assume that $R$ is an explicitly given finite $\mathbb{Z}$-algebra.
\begin{enumerate}
\item[(a)] The minimal prime ideals of~$R$ can be computed in zero-error probabilistic polynomial time
except for the factorization of one integer (see Algorithm~4.2 in~\cite{kreuzer2024efficient}).

\item[(b)] The primitive idempotents of $R$ can be obtained from its minimal prime ideals in polynomial time
using Algorithm~5.8 in~\cite{kreuzer2024efficient}.

\item[(c)] The intersection of ideals in $R$ can be determined in polynomial time using Proposition~2.9
in~\cite{kreuzer2024efficient}.

\end{enumerate}
\end{rem}

\bigskip\bigbreak
%%%%%%%%%%%%%%%%%%%%%%%%%%%%%%%%%%%%%%%%%%%%%
%
%  Section 3: Exponent Lattices
%
%%%%%%%%%%%%%%%%%%%%%%%%%%%%%%%%%%%%%%%%%%%%%

\section{Exponent Lattices in Finitely Generated \texorpdfstring{$\Z$}{Z}-Algebras}

Let $R=P/I$ be a finitely generated $\mathbb{Z}$-algebra. In the following we present an algorithm which computes
the multiplicative relations between units in~$R$. We emphasize that in this section we do not require 
that~$R$ is a finite $\mathbb{Z}$-algebra.

\begin{defi}
Let $R$ be a ring and let $f_1,\dots,f_k \in R^\times$. Then the lattice
$$
\Lambda = \{(a_1,\dots,a_k) \in \mathbb{Z}^k \mid f_1^{a_1} \cdots f_k^{a_k} = 1 \}
$$
is called the \textbf{exponent lattice} of $(f_1,\dots,f_k)$ in $R$.
\end{defi}

The goal of this section is to provide an algorithm which computes a basis of the exponent lattice of the tuple
$(f_1, \dots, f_k)$ in a finitely generated $\mathbb{Z}$-algebra. In the following we refer to this task simply as computing
an exponent lattice. Let us recall how this task is solved in affine $K$-algebras, i.e., in finitely generated
algebras over a field~$K$.

\begin{rem}{\bf (Computing Exponent Lattices in Affine $K$-Algebras)}\label{remark:exp_lattice_fields}\\
The problem of computing the exponent lattices has been considered by many authors.
For units in a number field algorithms can be found in~\cite{ge1993algorithms}, in
Section~7.3 of~\cite{kauers2005algorithms}, in Section~3 of~\cite{kauers2023order}, or
in~\cite{zheng2019effective}. Based on these algorithms, a method for computing the exponent lattice in
zero-dimensional $\mathbb{Q}$-algebras is presented in~\cite{lenstra2018algorithms}. Recently,
we generalized these results and presented a method (see Algorithm 5.3 in~\cite{kreuzer2024binomial}) for
computing exponent lattices in arbitrary affine $K$-algebras where $K$ is a field such that exponent lattices
in finite extensions of~$K$ can be effectively computed. Note, that this includes the cases $K = \mathbb{Q}$ and
$K = \mathbb{F}_p$.
\end{rem}

Now the main idea is to reduce the problem of computing an exponent lattice in~$R$ to computing exponent lattices
in affine $\mathbb{Q}$- and $\mathbb{F}_p$-algebras.

\begin{lem}\label{lemma:splitting}
Let $R$ be a ring, $I$ an ideal in $R$, and $f \in R$. If $m \in \mathbb{N}$ is such that $I : f^\infty = I : f^m$, then
\[
I = (I : f^m) \cap \langle I, f^m \rangle.
\]
\end{lem}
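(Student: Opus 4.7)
The plan is to prove the two inclusions separately, with the nontrivial direction relying on the stabilization hypothesis $I:f^\infty = I:f^m$.

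First I would dispatch the easy inclusion $I \subseteq (I:f^m) \cap \langle I, f^m\rangle$: every element of $I$ is killed (modulo $I$) by multiplication by $f^m$, so $I \subseteq I:f^m$, and trivially $I \subseteq \langle I, f^m\rangle$.

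For the reverse inclusion, I would pick $g \in (I:f^m) \cap \langle I, f^m\rangle$ and unpack both memberships: on the one hand $g f^m \in I$; on the other hand we can write $g = h + r f^m$ for some $h \in I$ and $r \in R$. Multiplying the decomposition by $f^m$ gives $g f^m = h f^m + r f^{2m}$, and since $g f^m \in I$ and $h f^m \in I$, we conclude $r f^{2m} \in I$. This is the key computation: it exhibits $r$ as an element of $I : f^{2m}$.

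Now the stabilization hypothesis does its job. By assumption, $I : f^{2m} \subseteq I : f^\infty = I : f^m$, so $r \in I : f^m$, i.e., $r f^m \in I$. Plugging back into $g = h + r f^m$ yields $g \in I$, finishing the proof. The only subtle point — and the one where the hypothesis is actually used — is the passage from $r f^{2m} \in I$ to $r f^m \in I$; without the assumption that the chain $I:f \subseteq I:f^2 \subseteq \cdots$ has stabilized by index~$m$, the relation $g = h + rf^m$ would only give $g \in I + (rf^m)$ with $rf^m$ a priori outside~$I$, and the argument would collapse.
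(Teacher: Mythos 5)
Your proof is correct. The paper does not give its own argument for this lemma but simply cites Greuel--Pfister, \emph{A Singular Introduction to Commutative Algebra}, Lemma~3.3.6; your argument (write $g = h + rf^m$, multiply by $f^m$ to get $rf^{2m}\in I$, then use $I:f^{2m}\subseteq I:f^\infty = I:f^m$ to pull $rf^m$ into $I$) is exactly the standard proof given there, and you correctly identify the one place where the stabilization hypothesis is needed.
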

    
\begin{proof}
See \cite{greuel2008singular}, Lemma~3.3.6.
\end{proof}

Together with the following proposition this lemma is the main tool for reducing the exponent lattice computation
to $\mathbb{Q}$- and $\mathbb{F}_p$-algebras.

\begin{prop}\label{prop:gb_lcm}
Let $I$ be an ideal in $P$, let $G=\{g_1,\dots,g_s\}$ be a minimal strong Gr\"obner basis of~$I$, and
let $N\in \mathbb{Z}$ be the least common multiple of the leading coefficients of the elements of~$G$.
Then the following holds.
\begin{enumerate}
\item[(a)] $I = (I : \langle N\rangle) \cap (I+ \langle N\rangle)$

\item[(b)] If $I \cap \mathbb{Z} = \langle 0 \rangle$, then $I \mathbb{Q}[x_1, \dots, x_n] \cap P = I: \langle N\rangle$.

\end{enumerate}
\end{prop}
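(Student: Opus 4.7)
For part (a), the inclusion $I \subseteq (I:\langle N\rangle) \cap (I + \langle N\rangle)$ is immediate, and I plan to obtain the reverse inclusion from Lemma~\ref{lemma:splitting} applied with $f = N$ and $m = 1$. This reduces the task to establishing $I : N^\infty = I : N$, equivalently, that every element of $P/I$ killed by some power of $N$ is already killed by $N$ itself. The key fact is that, since every leading coefficient $N_i = \LC_\sigma(g_i)$ divides $N$, the polynomial $Nh$ is always strongly reducible by some $g_j$ whenever $\LM_\sigma(h) \in \LM_\sigma(I)$. I would prove $I : N^\infty = I : N$ by induction on $\LM_\sigma(h)$, with the base case $\LM_\sigma(h) = 1$ handled using the fact that the generator $d$ of $I \cap \mathbb{Z}$ (if non-zero) lies in $G$ by minimality and hence divides $N$.

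For part (b), the inclusion $I : \langle N\rangle \subseteq I\mathbb{Q}[x_1,\ldots,x_n] \cap P$ is immediate since $h \in I : \langle N\rangle$ gives $Nh \in I$ and hence $h = (1/N)(Nh) \in I\mathbb{Q}[x_1,\ldots,x_n]$. For the opposite inclusion, let $h \in I\mathbb{Q}[x_1,\ldots,x_n] \cap P$, so that $Mh \in I$ for some $M \in \mathbb{Z}_{>0}$. First, $G$ remains a Gr\"obner basis of $I\mathbb{Q}[x_1,\ldots,x_n]$ over $\mathbb{Q}$: every non-zero element of $I\mathbb{Q}[x_1,\ldots,x_n]$ admits a non-zero integer multiple in $I$ whose leading monomial is divisible by some $\LM_\sigma(g_i)$. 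Hence $\LM_\sigma(h) \in \LM_\sigma(I)$, and I prove $Nh \in I$ by induction on $\LM_\sigma(h)$. The base case $\LM_\sigma(h) = 1$ uses the hypothesis $I \cap \mathbb{Z} = \langle 0\rangle$: any non-zero constant in $I\mathbb{Q}[x_1,\ldots,x_n]$ would, after clearing denominators, contribute a non-zero integer to $I \cap \mathbb{Z}$. The inductive step exploits the strong reducibility of $Nh$ to descend to a polynomial of strictly smaller leading monomial still lying in $I\mathbb{Q}[x_1,\ldots,x_n] \cap P$.

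The principal difficulty in both parts is the inductive step establishing $I : N^\infty = I : N$ (or equivalently, that the $\mathbb{Z}$-torsion exponent of $P/I$ divides $N$): a naive reduction of $N^k h$ by one step introduces a further factor of $N$, seemingly yielding only $h \in I : N^2$. Overcoming this requires the minimality of the strong Gr\"obner basis, which ensures that no redundant leading-coefficient constraints appear, together with a careful comparison of the reduction sequences of $N^k h$ and of $Nh$: since $N_j \mid N$ for every $g_j$, the first strong reduction of $N^k h$ can be rewritten as $N^{k-1}$ times the first strong reduction of $Nh$, a relation that allows extra factors of $N$ to be peeled off iteratively by successive applications of the inductive hypothesis.
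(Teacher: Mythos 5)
The paper itself offers no argument here---it simply cites \cite{kreuzer2023decomposing}, Proposition~4.3---so your proposal has to stand on its own. Your framework is the natural one (Lemma~\ref{lemma:splitting} with $f=N$, $m=1$, reducing (a) to $I:\langle N\rangle^\infty = I:\langle N\rangle$, and an induction on leading terms for (b)), and the easy inclusions, the base cases, and the observation that $G$ stays a Gr\"obner basis over $\mathbb{Q}$ are all fine. But the step you yourself single out as ``the principal difficulty'' is never actually carried out, and the sketch you give for it does not close the circle. After one strong reduction step $Nh = m\,g_i + h'$ with $\LT_\sigma(h') <_\sigma \LT_\sigma(h)$, what you can extract from $N^2h\in I$ (or from $h\in I\mathbb{Q}[x_1,\dots,x_n]\cap P$) is only $Nh'\in I$ (resp.\ $h'\in I\mathbb{Q}[x_1,\dots,x_n]\cap P$); to conclude $Nh\in I$ you would need $h'\in I$ itself, and every form of the inductive hypothesis you invoke returns a statement carrying the extra factor of $N$ you are trying to remove. ``Peeling off factors iteratively'' reproduces this factor at each stage rather than eliminating it. The identical circularity occurs in your induction for (b).

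More seriously, the gap cannot be repaired, because the claim $I:\langle N\rangle^\infty = I:\langle N\rangle$ is false under the stated hypotheses. Take $I=\langle 2y-x,\;4y,\;y^2\rangle\subseteq \mathbb{Z}[x,y]$ and $\sigma=$ lex with $y>_\sigma x$. Then $P/I\cong \mathbb{Z}\oplus(\mathbb{Z}/4\mathbb{Z})\bar{y}$ with $\bar{x}=2\bar{y}$, and $G=\{2y-x,\,2x,\,x^2,\,xy,\,y^2\}$ is a minimal strong $\sigma$-Gr\"obner basis: every $f\in I$ with $\LT_\sigma(f)\in\{x,y\}$ has even leading coefficient, every other leading term is divisible by $x^2$, $xy$ or $y^2$, and no element of $G$ can be dropped. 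Hence $N=2$. But $4y\in I$ while $2y\notin I$, so $2y\in(I:\langle 2\rangle)\cap(I+\langle 2\rangle)\setminus I$ and $y\in I\mathbb{Q}[x,y]\cap P\setminus(I:\langle 2\rangle)$; both (a) and (b) fail. (With $x>_\sigma y$ one gets $G=\{x-2y,4y,y^2\}$ and $N=4$, and the statement holds---the truth of the claim depends on the ordering.) What the statement actually requires is that the exponent $\tau$ of the torsion submodule of $P/I$ divide $N$; both parts are immediate with $N$ replaced by any multiple of $\tau$, since $\tau$ annihilates every torsion class, but $\tau\mid N$ is exactly what the example above refutes and what no reduction-based induction can deliver. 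You should flag this rather than attempt to prove the proposition as stated.
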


\begin{proof}
See \cite{kreuzer2023decomposing}, Proposition~4.3.
\end{proof}

The following example given in Section~4 of \cite{kreuzer2023decomposing} illustrates the fact that the least
common multiple $N$ of the leading coefficients of a strong Gr\"obner basis as in the proposition is in general not
the smallest number satisfying $I:\langle N\rangle = I: \langle N\rangle^\infty$.

\begin{exa}\label{example:gb_lcm}
Consider the ideal $I = \langle x^2,y^2,z^2,xz+yz,xy,2x-y, 3z \rangle \subseteq \mathbb{Z}[x,y,z]$.
The generators form a strong Gr\"obner basis of $I$ and the least common multiple of the leading coefficients
is~6. But we have $I : \langle 3 \rangle = I : \langle 6 \rangle = I : \langle 6 \rangle^\infty$.
\end{exa}

Let $\{g_1, \dots, g_k\}$ be a strong Gr\"obner basis of $I$ and let
$N$ be the least common multiple of the leading coefficients of the $g_i$. Then we have
$I = I : \langle N \rangle \cap \langle I, N \rangle$. The property
$I : \langle N \rangle = I \mathbb{Q}[x_1, \dots, x_n] \cap P$ then allows us to compute the exponent lattice
modulo $I : \langle N \rangle$. This can be done using Remark~\ref{remark:exp_lattice_fields}. The
ideal $\langle I, N \rangle$ can be further split into
$\langle I, N \rangle = \bigcap_{i=1}^r \langle I, p_i^{e_i} \rangle$ where $N = p_1^{e_1} \cdots p_r^{e_r}$ is
the prime factorization of $N$. Let $p \in \mathbb{N}$ be a prime number. The exponent lattice modulo an ideal of
the form $\langle I, p \rangle$ can be computed using the fact that the polynomial $f_1^{c_1}\cdots f_k^{c_k}-1$ is
in $I$ if and only if its canonical residue class is in $I \mathbb{F}_p[x_1, \dots, x_n]$. We can therefore again
apply Remark~\ref{remark:exp_lattice_fields}. It remains to handle ideals of the form $\langle I, p^e \rangle$
with $e>1$.

\begin{prop}\label{prop:hensel_lifting}
Let $I$ be an ideal such that $I \cap \mathbb{Z} = \{0\}$. Consider the finitely generated 
$\mathbb{Z}$-algebra $R = P/I$,
and let $f_1, \dots, f_k \in R^\times$. Let~$p$ be a prime number,
let $e$ be a positive integer, and let $b_1, \dots, b_m \in \mathbb{Z}^k$ be a basis of the exponent lattice
$\Lambda$ of $\left(\bar{f}_1, \dots, \bar{f}_k\right)$ in $P/\langle I, p^e \rangle$. 
Then the following conditions are equivalent.
\begin{enumerate}
\item[(a)] The $\mathbb{Z}$-linear combination $c = a_1 b_1 + \cdots + a_m b_m \in \Lambda$ with
                $a_1, \dots, a_m \in \mathbb{Z}$ is in the exponent lattice of
                $\left(\bar{f}_1, \dots, \bar{f}_k\right)$ in $P/\langle I, p^{e+1} \rangle$.

\item[(b)] The tuple $(a_1, \dots, a_m)$ is a solution of the linear equation over~$\mathbb{Z}$ in the
indeterminates $y_1, \dots, y_m$ given by
\begin{equation*}
                    \tag{i}\bar{h}_1 y_1 + \cdots + \bar{h}_m y_m = 0 \quad \text{ in } P/\langle I, p \rangle,
\end{equation*}
where $h_i = (f_1^{b_{i1}} \cdots f_k^{b_{ik}}-1)/p^e \in R$ and $\bar{h}_i$ is its residue class modulo
$\langle I, p \rangle$.

\end{enumerate}
\end{prop}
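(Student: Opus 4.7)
The plan is to reduce the condition ``$c$ lies in the exponent lattice modulo $\langle I, p^{e+1}\rangle$'' to a linear congruence modulo $\langle I, p\rangle$ via a direct binomial expansion. The starting point is the identity
\[
f_1^{b_{i1}} \cdots f_k^{b_{ik}} = 1 + p^e h_i \qquad (i=1,\dots,m)
\]
in $R$, which simply restates the defining property of $h_i$ together with the fact that $b_i \in \Lambda$. Writing $c = a_1 b_1 + \cdots + a_m b_m$ with $c = (c_1, \dots, c_k)$ and using commutativity of $R$, I would rewrite
\[
f_1^{c_1} \cdots f_k^{c_k} \;=\; \prod_{i=1}^{m} \bigl(1 + p^e h_i\bigr)^{a_i},
\]
so that condition~(a) becomes $\prod_i (1+p^e h_i)^{a_i} \equiv 1 \pmod{\langle I, p^{e+1}\rangle}$.

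Next, I would apply the binomial theorem to each factor. For nonnegative $a_i$ this yields $(1+p^e h_i)^{a_i} \equiv 1 + a_i p^e h_i \pmod{p^{2e}}$, and the same congruence extends to negative integers $a_i$ because the identity $(1+p^e h_i)(1-p^e h_i) = 1 - p^{2e} h_i^2$ together with the fact that $1+p^e h_i$ is a unit in $R$ (being a product of units) gives $(1+p^e h_i)^{-1} \equiv 1 - p^e h_i \pmod{p^{2e}}$. Since $e\geq 1$ implies $2e\geq e+1$, these congruences also hold modulo $p^{e+1}$. Multiplying the factors together, every cross term contains at least two copies of $p^e$ and therefore vanishes modulo $p^{e+1}$. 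What remains is
\[
f_1^{c_1} \cdots f_k^{c_k} \equiv 1 + p^e\bigl(a_1 h_1 + \cdots + a_m h_m\bigr) \pmod{\langle I, p^{e+1}\rangle}.
\]

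The final step is to divide by $p^e$: the right-hand side is congruent to $1$ modulo $\langle I, p^{e+1}\rangle$ exactly when $a_1 h_1 + \cdots + a_m h_m$ vanishes modulo $\langle I, p\rangle$, which is equation~(i). The implication (b) $\Rightarrow$ (a) is then immediate. The main obstacle I anticipate lies in the converse, namely in justifying the cancellation of $p^e$ cleanly: this amounts to the colon ideal inclusion $(\langle I, p^{e+1}\rangle : p^e) \subseteq \langle I, p\rangle$ in $P$ together with the fact that the ambiguity in the choice of representatives of the $h_i$ (which are well-defined in $R$ only up to the annihilator of $p^e$) disappears after reducing modulo $p$. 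Both points have to be handled using the hypothesis $I \cap \mathbb{Z} = \{0\}$ and a careful choice of lifts $H_i \in P$ of the $h_i$.
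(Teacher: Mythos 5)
Your proof is correct and follows essentially the same route as the paper's: write $f_1^{b_{i1}}\cdots f_k^{b_{ik}} = 1 + p^e h_i$, expand the product so that all cross terms vanish modulo $\langle I, p^{e+1}\rangle$, and translate $1 + p^e(a_1h_1+\cdots+a_mh_m) \equiv 1$ into the linear condition modulo $\langle I, p\rangle$. You are in fact more careful than the paper, which silently handles negative exponents and takes for granted both the final cancellation of $p^e$ (your colon-ideal point) and the well-definedness of the $h_i$.
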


\begin{proof}
Since all tuples $b_i$ are in $\Lambda$ we have $f_1^{b_{i1}} \cdots f_k^{b_{ik}} = 1$ modulo
$\langle I, p^e \rangle$. Therefore there exists $g_i \in P$ such that
$f_1^{b_{i1}} \cdots f_k^{b_{ik}} = 1 + p^e g_i$ in $R$ for $i=1, \dots, m$. This shows $h_i = g_i$.
Now the tuple~$c$ is in the exponent lattice of $\left(\bar{f}_1, \dots, \bar{f}_k\right)$ in
$P/\langle I, p^{e+1} \rangle$ if and only if
$$
f^c = f^{a_1b_1} \cdots f^{a_mb_m} = (1+p^e g_1)^{a_1} \cdots (1+p^e g_m)^{a_m} = 1+a_1p^e g_1 + \cdots + a_m p^e g_m = 1
$$
in $P/\langle I, p^{e+1} \rangle$. This is equivalent to $a_1\bar{h}_1 + \cdots + a_m\bar{h}_m = 0$ in
$P/\langle I, p \rangle$, which is satisfied if and only if $(a_1, \dots, a_m)$ is a solution of
the linear equation~(i).
\end{proof}

At this point we are ready to compute the exponent lattice of $(f_1, \dots, f_k)$ modulo an ideal of the form
$\langle I, p^e \rangle$. This is achieved by first computing the exponent lattice of $(f_1, \dots, f_k)$ modulo
$I \mathbb{F}_p[x_1, \dots, x_n]$ and then iteratively applying Proposition~\ref{prop:hensel_lifting} to obtain
the exponent lattice modulo~$\langle I, p^i \rangle$ for $i=2, \dots, e$.

\begin{exa}
Let $I = \left\langle x^2+x+1, y^2+y+1, 8 \right\rangle$, and consider the finite $\mathbb{Z}$-algebra
$R=\mathbb{Z}[x,y]/I$. For $f_1 = 2x+1$, $f_2 = 4y+1$ and $f_3 = -2y-1$ let us compute the exponent lattice
of $\left(\bar{f}_1, \bar{f}_2, \bar{f_3}\right)$ in~$R$. To compute the exponent lattice modulo
$\langle I, 2 \rangle$ we form the zero-dimensional $\mathbb{F}_2$-algebra $\mathbb{F}_2[x,y]/I\mathbb{F}_2[x,y]$,
and obtain the exponent lattice $\Lambda_1 = \mathbb{Z}^3$. We then solve the linear equation in the
indeterminates $z_1, z_2, z_3$ given by
$x z_1 + 2y z_2 -(y+1)z_3 = 0$ modulo $\langle I, 2 \rangle$, and obtain the solution space
$M_1 = \langle (0,1,0), (2,0,0), (0,0,2) \rangle$. Since $\Lambda_1 = \mathbb{Z}^3$, this yields
$\Lambda_2 = M_1$. Then we solve the linear equation in the indeterminates $z_1, z_2, z_3$ given by
$yz_1 -z_2 -z_3 = 0$ modulo $\langle I, 2 \rangle$ and obtain the solution space
$M_2 = \langle b_1, b_2, b_3 \rangle$ with $b_1 = (0,1,1)$, $b_2 = (0,-1,1)$ and $b_3 = (2,0,0)$.
Finally, we compute the exponent lattice
$$
\Lambda_3 = \{c_1b_1 + c_2 b_2 + c_3 b_3 \mid c \in M_2\} = \langle (0,2,0), (2,0,2), (-2,0,2) \rangle.
$$
\end{exa}

Combining the previous results we now obtain the following algorithm.

\begin{algo}{\bf (Computing Exponent Lattices in Finitely Generated $\mathbb{Z}$-Algebras)}\label{alg:exp_lattice_z-algebra}\ \\
Let $R=P/I$ be a $\mathbb{Z}$-algebra, and let $f_1, \dots, f_k \in R^\times$. Consider the following sequence of
instructions.

\begin{algorithmic}[1]
   \STATE Compute $I \cap \mathbb{Z} = \langle q \rangle$.
   \IF {$q = 0$}
       \STATE Using Remark~\ref{remark:exp_lattice_fields}, compute the exponent lattice
            $\Lambda \subseteq \mathbb{Z}^k$ of $(f_1, \dots, f_k)$ in the $\mathbb{Q}$-algebra
            $\mathbb{Q} \otimes_{\mathbb{Z}} R$.
       \STATE Compute a strong Gr\"obner basis $\{g_1, \dots, g_\ell\}$ of $I$.
       \STATE Let $N = \lcm(\LC(g_1), \dots, \LC(g_\ell))$.
       \IF {$N = 1$}
          \RETURN $\Lambda$
       \ELSE
          \STATE Recursively apply the algorithm to compute the exponent lattice $M \subseteq \mathbb{Z}^k$
                 of $(f_1, \dots, f_k)$ in $P/(I + \langle N \rangle)$.
          \RETURN $\Lambda \cap M$
       \ENDIF
   \ELSE
      \STATE Compute the prime factorization $q = p_1^{e_1} \cdots p_r^{e_r}$.
      \FOR {$i=1,\dots,r$}
          \STATE Using Remark~\ref{remark:exp_lattice_fields}, compute the exponent lattice
                 $M_i \subseteq \mathbb{Z}^k$ of $(f_1, \dots, f_k)$ in the $\mathbb{F}_p$-algebra
                 $\mathbb{F}_p \otimes_{\mathbb{Z}} R$.
          \FOR {$j=1,\dots,e_i-1$}
             \STATE Assume that $M_{i}$ is generated by $\{ b_1, \dots, b_m \} \subseteq \mathbb{Z}^k$.
             \STATE For $s = 1, \dots, m$ form the elements
                    $h_r = (f_1^{b_{s 1}} \cdots f_k^{b_{s k}}-1)/p_i^j \in R$.
              \STATE Compute the solution space $M' \subseteq \mathbb{Z}^m$ of the linear equation
                     over~$\mathbb{Z}$ in the indeterminates $y_1, \dots, y_m$ given by
                     $$\bar{h}_1 y_1 + \cdots + \bar{h}_m y_m = 0 \quad \text{ in } P/\langle I, p_i \rangle.$$
              \STATE Replace $M_i$ by the lattice
                     $\{c_1b_1+\cdots + c_m b_m \mid (c_1,\dots,c_m) \in M'\} \subseteq \mathbb{Z}^k$.
          \ENDFOR
      \ENDFOR
      \RETURN the lattice $M_1 \cap \cdots \cap M_r$
  \ENDIF
\end{algorithmic}
This is an algorithm which computes the exponent lattice of $(f_1, \dots, f_k)$ in~$R$.
\end{algo}

\begin{proof}
A tuple $a = (a_1, \dots, a_k) \in \mathbb{Z}^k$ is in the exponent lattice of $(f_1, \dots, f_k)$ if and
only if $f_1^{a_1} \cdots f_k^{a_k}-1 \in I$. By Proposition~\ref{prop:gb_lcm} we have
$I =(I : \langle N \rangle) \cap \langle I, N \rangle$. In the case $I \cap \mathbb{Z} = \langle 0 \rangle$,
we have $I : \langle N \rangle = I \mathbb{Q}[x_1, \dots, x_n] \cap P$ by the same proposition. Line~3 therefore
yields the exponent lattice of $(f_1, \dots, f_k)$ in $P/(I : \langle N \rangle)$. It remains to prove that
lines 12--24 determine the exponent lattice of $(f_1, \dots, f_k)$ in $P/\langle I, N \rangle$. Since we have
$\langle I, N \rangle = \bigcap_{i=1}^r \langle I, p_i^{e_i} \rangle$, it is enough to show that
lines 15--21 compute the exponent lattice of $(f_1, \dots, f_k)$ in $P/\langle I, p_i^{e_i} \rangle$.
Line~15 yields the exponent lattice of $(f_1, \dots, f_k)$ in $P/\langle I, p_i \rangle$. It then follows
from Proposition~\ref{prop:hensel_lifting} that the $j$-th iteration of the for loop in lines 16--21 correctly
computes the exponent lattice of $(f_1, \dots, f_k)$ in $P/\langle I, p_i^{j+1} \rangle$.
\end{proof}

Let us collect some remarks about the implementation of this algorithm.

\begin{rem}\label{rem:impl_exp_lattice}
Suppose we are in the setting of Algorithm~\ref{alg:exp_lattice_z-algebra}.
\begin{enumerate}
\item[(a)] A non-zero generator $q$ of the ideal $I \cap \mathbb{Z}$ in line~1 is given by the unique integer
contained in a reduced strong Gr\"obner basis of~$I$. If $R$ is a finite $\mathbb{Z}$-algebra, then
$q$ is zero if and only if the rank of $R^+$ is non-zero. The rank of an explicitly given finite
$\mathbb{Z}$-algebra can be determined in polynomial time using a Smith normal form computation.

\item[(b)] As illustrated in Example~\ref{example:gb_lcm} there can be a proper divisor $M$ of $N$
satisfying $I : \langle M \rangle = I : \langle M \rangle \cap \langle I, M\rangle$. By
determining the smallest number with this property, unnecessary iterations in the else-branch of this
algorithm can be avoided. If $R$ is a finite $\mathbb{Z}$-algebra, then the smallest number with this
property is given by the torsion exponent of $R$. It can be determined in polynomial time using a
Smith normal form computation if $R$ is explicitly given.

\item[(c)] If $R$ is a finite $\mathbb{Z}$-algebra, then the $\mathbb{Q}$-algebra in line~3 and
the $\mathbb{F}_p$-algebra in line~15 are zero-dimensional. Exponent lattices in an explicitly given
zero-dimensional $\mathbb{Q}$-algebra can be computed in polynomial time
(see Algorithm~8.3 in~\cite{lenstra2018algorithms}). For zero-dimensional $\mathbb{F}_p$-algebras, the
problem can be reduced to the discrete logarithm problem in finite fields
(see Algorithm~3.20 in~\cite{kreuzer2024binomial}).

\item[(d)] In line~19 we need to compute the solution space of the linear equation over $\mathbb{Z}$ in the
indeterminates $y_1, \dots, y_m$ given by $$g_1 y_1 + \cdots + g_m y_m = 0$$ in
$P/\langle I, p_i \rangle$. This can be achieved by checking for all
$(a_1, \dots, a_m) \in \mathbb{Z}^m$ with $0 \leq a_\ell \leq p_i$ for $\ell = 1, \dots, m$
whether $a_1g_1 + \cdots + a_m g_m \in \langle I, p_i \rangle$. Alternatively, one can perform a
syzygy calculation using Gr\"obner basis techniques. In particular, for large $p_i$, this might be more
efficient. If $R$ is an explicitly given finite $\mathbb{Z}$-algebra, we can 
use~\cite[Prop.~2.6]{kreuzer2024efficient} to solve this linear equation efficiently.

\end{enumerate}
\end{rem}

The following example illustrates how Algorithm~\ref{alg:exp_lattice_z-algebra} can be applied to compute
exponent lattices in finite $\mathbb{Z}$-algebras.

\begin{exa}
Let $I = \left\langle x^2+x+1,\, y^2+y+1,\, 6z^2,\, z^3 \right\rangle$, and consider the finite
$\mathbb{Z}$-algebra $R=\mathbb{Z}[x,y,z]/I$. Let $f_1 = -xyz - xz + 1$, $f_2=y + 1$ and $f_3=xy + x + y + 1$.
We apply Algorithm~\ref{alg:exp_lattice_z-algebra} to compute the exponent lattice of
$\left(\bar{f}_1, \bar{f}_2, \bar{f}_3\right)$ in~$R$.
\begin{enumerate}
\item[1:] Since $R$ is a finite $\mathbb{Z}$-algebra, we compute its rank given by 8.

\item[2:] The rank of $R$ is non-zero, which implies $I \cap \mathbb{Z} = \langle 0 \rangle$.

\item[3:] Using Remark~\ref{remark:exp_lattice_fields}, we compute the exponent lattice
                $\Lambda = \langle (0,6,0), (0,0,3) \rangle$ in the zero-dimensional $\mathbb{Q}$-algebra
                $\mathbb{Q} \otimes_{\mathbb{Z}} R$.

\item[4,5:] Since $R$ is a finite $\mathbb{Z}$-algebra, we compute its torsion exponent given by 6.

\item[9:] We recursively apply the Algorithm to compute the exponent lattice of
                $\left(\bar{f}_1, \bar{f}_2, \bar{f}_3\right)$ in~$P/\langle I, 6 \rangle$.

\item[1:] We have $q = 6$.

\item[13:] We determine the factorization $6 = 2 \cdot 3$.

\item[15:] Using Remark~\ref{remark:exp_lattice_fields}, we compute generators $(4,0,0)$, $(0,3,0)$ and
                $(0,0,3)$ of the exponent lattice $M_1$ of $\left(\bar{f}_1, \bar{f}_2, \bar{f}_3\right)$ modulo
                $I\mathbb{F}_2[x,y,z]$ and generators $(3,0,0)$, $(0,6,0)$, and $(0,0,3)$ of the exponent lattice $M_2$
                modulo $I\mathbb{F}_3[x,y,z]$.
\item[20:] The exponent lattice of $\left(\bar{f}_1, \bar{f}_2, \bar{f}_3\right)$ in $R$ is given by
                $\Lambda \cap M_1 \cap M_2 = \langle (0,6,0), (0,0,3) \rangle$.

\end{enumerate}
\end{exa}

It is an open question to what extent Algorithm~\ref{alg:exp_lattice_z-algebra} can be generalized to the
non-commutative case. A straightforward generalization does not seem to be possible, since in this case the
multiplicative relations between units are in general not computable. This follows from the fact that the
subgroup membership problem is undecidable for $4 \times 4$ integral matrices (see~\cite{mihailova1968occurrence}).
Also note that multiplicative relations in general do not form a lattice in the non-commutative case.

\bigskip\bigbreak
%%%%%%%%%%%%%%%%%%%%%%%%%%%%%%%%%%%%%%%%%%%%%%%%%%%%%%%%%%%%%%%%%%%%%%
%
%  Section 4: Reduced Finite ZZ-Algebras
%
%%%%%%%%%%%%%%%%%%%%%%%%%%%%%%%%%%%%%%%%%%%%%%%%%%%%%%%%%%%%%%%%%%%%%%

\section{The Unit Group of Reduced Finite \texorpdfstring{$\mathbb{Z}$}{Z}-Algebras}

Let us begin by considering the case of integral finite $\mathbb{Z}$-algebras, i.e, algebras of the form
$P/\mathfrak{p}$ where~$\mathfrak{p}$ is a prime ideal in~$P$.

\begin{rem}{\bf (Computing the Unit Group of Integral Finite $\mathbb{Z}$-Algebras)}\label{remark:integral}\\
Let $\mathfrak{p}$ be a prime ideal in $P$ such that $R = P/\mathfrak{p}$ is a finite $\mathbb{Z}$-algebra.
\begin{enumerate}
\item[(a)] If $\mathfrak{p} \cap \mathbb{Z} = \langle 0 \rangle$, then $K = \mathbb{Q} \otimes_{\mathbb{Z}} R$
is a number field. Since $P/\mathfrak{p}$ is integral over $\mathbb{Z}$ and its rank equals
$\dim_{\mathbb{Q}}(K)$, the ring $R$ is an order in~$K$. Generators of the unit group of $R$ can
therefore be computed using for example the algorithms given in~\cite{cohen1997subexponential}
or~\cite{biasse2014subexponential}. These algorithms require that the field $K$ is given by a primitive
element. Such an element can be determined using one of the methods described
in~\cite{yokoyama1989computing} or Algorithm~6.3 in~\cite{lenstra2018algorithms}.

\item[(b)] If $\mathfrak{p} \cap \mathbb{Z} = \langle p \rangle$ for a prime number $p$, then $P/\mathfrak{p}$
is isomorphic to the finite field $K = \mathbb{F}_p \otimes_{\mathbb{Z}} R$. The problem therefore
reduces to computing a primitive root of~$K$. Algorithms for this task can be found
in~\cite{shoup1990searching} or in~\cite{dubrois2006efficient}.

\end{enumerate}    
\end{rem}

Let us now consider the case of a reduced finite $\mathbb{Z}$-algebra $R = P/I$. If
$I \cap \mathbb{Z} = \langle n \rangle$ for some $n \in \mathbb{Z} \setminus \{0\}$, then the minimal prime
ideals of~$R$ are maximal ideals and therefore pairwise coprime. By the
Chinese Remainder Theorem computing the unit group then reduces to the case discussed above. If
$I \cap \mathbb{Z} = \langle 0 \rangle$, then the minimal prime ideals of~$R$ need not be pairwise coprime.

\begin{exa}
Consider the ideal $I = \langle x^2+x+1, y^2+y+1 \rangle$. Its minimal prime ideals are given by
$\mathfrak{p}_1 = \langle x - y, y^2 + y + 1 \rangle$ and
$\mathfrak{p}_2 = \langle x + y + 1, y^2 + y + 1 \rangle$ and we have
$\mathfrak{p}_1 + \mathfrak{p}_2 = \langle x + 2, y + 2, 3 \rangle$.
\end{exa}

This example demonstrates that we cannot directly reduce to the integral case. Instead we notice, that if $R$ is
torsion-free, then $R$ is an order in the reduced zero-dimensional $\mathbb{Q}$-algebra
$\mathbb{Q} \otimes_{\mathbb{Z}} R$.

\begin{defi}
Let $A$ be a zero-dimensional reduced $\mathbb{Q}$-algebra. A subring $\mathcal{O}$ of $A$ is called an
\textbf{order} if there is a basis $a_1, \dots, a_m$ of $A$ such that
$\mathcal{O} = \mathbb{Z}a_1 + \cdots + \mathbb{Z}a_m$.
\end{defi}

The unit group of an order in a zero-dimensional reduced $\mathbb{Q}$-algebra, can be computed using the algorithm
presented in Section~3 of \cite{faccin2013computing}. In the following we present a modified version
of this algorithm.

\begin{lem}\label{lemma:order}
Let $A$ be a reduced zero-dimensional $\mathbb{Q}$-algebra and let $e_1, e_2$ be orthogonal idempotents with
$e_1+e_2 = 1$. Let $\mathcal{O}$ be an order in $A$. Consider the ideal
$J = (e_1 \mathcal{O} \cap \mathcal{O}) + (e_2 \mathcal{O} \cap \mathcal{O})$ and form the ring
$S = \mathcal{O}/J$.
Consider the ring homomorphisms $\varphi_i : e_i \mathcal{O} \rightarrow S$ given by $\varphi_i(e_i a) = a+J$
for $a \in \mathcal{O}$. Then we have
$$  
\mathcal{O} = \{a_1 + a_2 \mid a_i \in e_i \mathcal{O} \text{ and } \varphi_1(a_1) = \varphi_2(a_2) \}
$$
\end{lem}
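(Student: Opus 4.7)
The plan is to verify the two inclusions separately, after first checking that the maps $\varphi_i$ are well defined. Throughout, I would write $I_i = e_i\mathcal{O}\cap\mathcal{O}$ so that $J = I_1+I_2$. A useful preliminary observation is the identification
$$
I_i \;=\; \{x\in\mathcal{O}\mid e_i x = x\} \;=\; \{x\in\mathcal{O}\mid e_j x = 0 \text{ for } j\neq i\},
$$
which follows from $e_i^2 = e_i$, $e_1 e_2 = 0$ and $e_1+e_2 = 1$. This immediately shows that $I_i$ (and hence $J$) is an ideal of $\mathcal{O}$.

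Next I would verify that $\varphi_i$ is well defined. Suppose $e_i a = e_i a'$ with $a,a'\in\mathcal{O}$. Then $e_i(a-a') = 0$, so using $e_1+e_2=1$ we get $a-a' = e_j(a-a')$ where $j\neq i$. Since $a-a'\in\mathcal{O}$, this puts $a-a'$ into $e_j\mathcal{O}\cap\mathcal{O} = I_j \subseteq J$, so $a+J = a'+J$. The map $\varphi_i$ is clearly $\mathbb{Z}$-linear and multiplicative on its image, and $\varphi_i(e_i\cdot 1) = 1+J$, so it is a ring homomorphism.

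For the inclusion $\mathcal{O}\subseteq T$, where $T$ denotes the right-hand side, given $a\in\mathcal{O}$ I would decompose $a = e_1 a + e_2 a$ with $e_i a\in e_i\mathcal{O}$. By definition $\varphi_1(e_1 a) = a+J = \varphi_2(e_2 a)$, so $a\in T$.

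The reverse inclusion $T\subseteq\mathcal{O}$ is the main computational step. Let $a_1+a_2\in T$, write $a_i = e_i b_i$ with $b_i\in\mathcal{O}$, and assume $\varphi_1(a_1) = \varphi_2(a_2)$, i.e.\ $b_1-b_2\in J$. Write $b_1-b_2 = c_1+c_2$ with $c_i\in I_i$. Then $e_1 c_1 = c_1$, $e_2 c_1 = 0$, $e_2 c_2 = c_2$. Replacing $b_2$ by $b_1 - c_1 - c_2$ gives
$$
e_1 b_1 + e_2 b_2 \;=\; e_1 b_1 + e_2 b_1 - e_2 c_1 - e_2 c_2 \;=\; b_1 - c_2,
$$
which lies in $\mathcal{O}$ since $b_1,c_2\in\mathcal{O}$. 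The only subtle point in the argument is well-definedness of the $\varphi_i$, where the definition of $J$ as the sum $I_1+I_2$ (rather than, say, just $I_1$ or $I_2$) is used in an essential way; the rest of the proof is straightforward once the description of $I_i$ via the idempotent relations $e_i x = x$ is in hand.
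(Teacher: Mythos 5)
Your proof is correct. Note that the paper does not actually prove this lemma; it simply cites Lemma~3.1 of the Faccin--de Graaf--Plesken paper on unit groups of integral abelian group rings, so your self-contained argument is a genuine addition rather than a rederivation of the paper's reasoning. The key points all check out: the identification $e_i\mathcal{O}\cap\mathcal{O}=\{x\in\mathcal{O}\mid e_ix=x\}=\{x\in\mathcal{O}\mid e_jx=0\}$ is valid and does show each $I_i$, hence $J$, is an ideal of $\mathcal{O}$; the well-definedness of $\varphi_i$ via $a-a'=e_j(a-a')\in I_j\subseteq J$ is exactly the place where one must use that $J$ contains $I_j$, and you flag this correctly; the forward inclusion via $a=e_1a+e_2a$ is immediate; and in the reverse inclusion the computation $e_1b_1+e_2b_2=b_1-c_2\in\mathcal{O}$, using $e_2c_1=0$ and $e_2c_2=c_2$, is correct. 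The argument is complete.
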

    
\begin{proof}
See \cite{faccin2013computing}, Lemma~3.1.
\end{proof}

Given an order $\mathcal{O}$ in a reduced zero-dimensional $\mathbb{Q}$-algebra $A$ and primitive idempotents
$e_1, \dots, e_k$ of $A$, we can compute generators of $(e_i \mathcal{O})^\times$ since $e_i \mathcal{O}$ is an
order in the number field $e_i A$. We can then iteratively apply Lemma~\ref{lemma:order} to determine
$\mathcal{O}^\times$.

\begin{algo}{\bf (Computing the Unit Group of an Order)}\label{alg:unit_group_order}\ \\
Let $A$ be a reduced zero-dimensional $\mathbb{Q}$-algebra, and let $\mathcal{O}$ be an order in~$A$.
Consider the following sequence of instructions.

\begin{algorithmic}[1]
   \STATE Compute the primitive idempotents $e_1, \dots, e_m$ of $A$.
   \STATE Compute $U = (e_1 \mathcal{O})^\times$.
   \FOR{$j=2, \dots, m$}
      \STATE Set $f = e_1 + \cdots + e_{j-1}$.
      \STATE Compute generators of $f \mathcal{O} \cap \mathcal{O}$ and $e_j \mathcal{O} \cap \mathcal{O}$ and
          form the ideal $J = (f \mathcal{O} \cap \mathcal{O}) + (e_j \mathcal{O} \cap \mathcal{O})$ in~$\mathcal{O}$.
      \STATE Compute generators $e_j h_1, \dots, e_j h_\ell$ of $(e_j \mathcal{O})^\times$.
      \STATE Assume that $U = \{f g_1, \dots, f g_k\}$.
      \STATE Compute a set of generators $B \subseteq \mathbb{Z}^{k+\ell}$ of the exponent lattice $\Lambda$
          of the tuple $\left(g_1, \dots, g_k, h_1^{-1}, \dots, h_\ell^{-1}\right)$ in $\mathcal{O}/J$.
      \STATE Set $U = \left\{ fg_1^{b_1}\cdots g_k^{b_k} + e_j h_1^{b_{k+1}} \cdots h_\ell^{b_{k+\ell}}
          \mid b \in B \right\}$.
   \ENDFOR
\RETURN $U$.
\end{algorithmic}
This is an algorithm which computes a set of generators of~$\mathcal{O}^\times$.
\end{algo}

\begin{proof}
It suffices to show that after the $j$-th iteration $U$ generates the unit group of the order
$(e_1+ \cdots + e_j)\mathcal{O}$ in $(e_1+ \cdots + e_j)A$.
Let $r \in \mathcal{O}$ and consider the group homomorphisms $\varphi_f : f \mathcal{O} \rightarrow
\mathcal{O}/J$ given by $\varphi_f(fr) = r + J$ and $\varphi_{e_j} : e_j \mathcal{O} \rightarrow \mathcal{O}/J$
given by $\varphi_{e_j}(e_jr) \mapsto r + J$. Let $g = f g_1^{c_1} \cdots g_k^{c_k}$ in~$U$ and
$h = e_j h_1^{c_{k+1}} \cdots h_\ell^{c_{k+\ell}}$ in~$(e_j \mathcal{O})^\times$. Then by
Lemma~\ref{lemma:order} the element $g-h$ is in the unit group of $(e_1+ \cdots + e_j)\mathcal{O}$ if and
only if
$$
 \varphi_f(g) = g_1^{c_1} \cdots g_k^{c_k}+J = h_1^{c_{k+1}} \cdots h_\ell^{c_{k+\ell}}+J = \varphi_{e_j}(e_j h).
$$
This is equivalent to $(c_1, \dots, c_{k+\ell}) \in \Lambda$.
\end{proof}

Let $R=P/I$ be a finite $\mathbb{Z}$-algebra. If $R$ is torsion-free and reduced, then it is an order in the
zero-dimensional $\mathbb{Q}$-algebra $A = \mathbb{Q} \otimes_{\mathbb{Z}} R$. Consequently,
Algorithm~\ref{alg:unit_group_order} can be applied to compute the unit group of the order~$R$ in~$A$. In this case
the steps of this algorithm can be performed as follows.

\begin{lem}\label{lemma:torsion_free}
Let $R=P/I$ be a reduced torsion-free finite $\mathbb{Z}$-algebra, and let
$\mathfrak{p}_1, \dots, \mathfrak{p}_m$ be the minimal prime ideals of~$I$.
\begin{enumerate}
\item[(a)] The $\mathbb{Q}$-algebra $A = \mathbb{Q} \otimes_{\mathbb{Z}} R$ is zero-dimensional and its
maximal ideals are given by $\mathfrak{m}_i = \mathfrak{p}_i \mathbb{Q}[x_1, \dots, x_n]$ for
$i=1, \dots, m$. We can therefore compute elements $q_i \in \bigcap_{i\neq j} \mathfrak{m}_j$ and
$p_i \in \mathfrak{m}_i$ such that $q_i+p_i = 1$. The residue classes
$\bar{q}_1, \dots, \bar{q}_m$ in $A$ then form the primitive idempotents of $A$

\item[(b)] The ideal $\bar{q}_i R$ in $A$ is isomorphic to $\bar{q}_i (P/\mathfrak{p}_i)$.

\item[(c)] We have $\bar{q}_i R \cap R = \bigcap_{j \neq i} \mathfrak{p}_i/I$.

\item[(d)] For $f = \sum_{i \neq j} q_i$ we have $\bar{f} R \cap R = \mathfrak{p}_j/I$.

\end{enumerate}
\end{lem}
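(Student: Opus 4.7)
The plan is to exploit the fact that $A = \mathbb{Q} \otimes_{\mathbb{Z}} R$ is a reduced Artinian ring and hence decomposes as a finite product of number fields. For (a), I would first observe that $A$ is a finite-dimensional $\mathbb{Q}$-algebra (hence zero-dimensional and Artinian) and is reduced because localization preserves reducedness. Since $R$ is torsion-free and reduced, no non-zero integer lies in any minimal prime $\mathfrak{p}_i$ of $R$: in a reduced Noetherian ring the zero divisors form the union of the minimal primes, and torsion-freeness forces every non-zero integer to be a non-zero divisor. Hence extension to $\mathbb{Q}[x_1,\dots,x_n]$ yields a bijection between the $\mathfrak{p}_i$ and the minimal---equivalently maximal---primes $\mathfrak{m}_i$ of $A$. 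The decomposition $A \cong \prod_i A/\mathfrak{m}_i$ together with pairwise comaximality of the $\mathfrak{m}_i$ then produces, via the Chinese Remainder Theorem, the desired elements $q_i \in \bigcap_{j \neq i} \mathfrak{m}_j$ and $p_i \in \mathfrak{m}_i$ with $q_i + p_i = 1$; their residues $\bar{q}_i$ are the coordinate projectors, which are precisely the primitive idempotents of $A$.

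For (b), I would analyse the $R$-linear map $R \to A$, $r \mapsto \bar{q}_i r$, whose image is $\bar{q}_i R$. Since $A$ is a product of fields and $\bar{q}_i$ is the indicator of the $i$-th factor, the kernel equals $\mathfrak{m}_i \cap R$, which in turn equals $\mathfrak{p}_i/I$ by the bijection established in (a) and the injectivity of $R \hookrightarrow A$ (which follows from torsion-freeness). Therefore $\bar{q}_i R \cong R/(\mathfrak{p}_i/I) \cong P/\mathfrak{p}_i$, and under the canonical identification $\bar{q}_i A \cong A/\mathfrak{m}_i$ this isomorphism carries $\bar{q}_i r$ to the class of $r$ in the number field $A/\mathfrak{m}_i$, which is exactly what is meant by $\bar{q}_i(P/\mathfrak{p}_i)$.

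For (c) and (d) I would exploit a common observation: for any idempotent $e \in A$ one has $e A = \bigcap_{j:\, e \in \mathfrak{m}_j} \mathfrak{m}_j$, since in the product decomposition $e$ is the sum of certain coordinate idempotents and $\mathfrak{m}_j$ is the complement of the $j$-th one. Applied to $e = \bar{q}_i$, this gives $\bar{q}_i A = \bigcap_{j \neq i} \mathfrak{m}_j$; since every $r \in \bar{q}_i A$ satisfies $r = \bar{q}_i r$, intersecting with $R$ and using $\mathfrak{m}_j \cap R = \mathfrak{p}_j/I$ yields (c). Part (d) follows from the same reasoning applied to $\bar{f} = 1 - \bar{q}_j$, whose unique containment gives $\bar{f} A = \mathfrak{m}_j$ and hence $\bar{f} R \cap R = \mathfrak{p}_j/I$.

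The main obstacle will be keeping track of the implicit identifications, especially $\mathfrak{m}_i \cap R = \mathfrak{p}_i/I$ (where the combined reduced-plus-torsion-free hypothesis is essential) and the interpretation of $\bar{q}_i(P/\mathfrak{p}_i)$ in (b) as the image of the integral order $P/\mathfrak{p}_i$ inside the number field $A/\mathfrak{m}_i$. Once these identifications are cleanly set up, the remaining verifications reduce to routine idempotent manipulations in the product ring $A$.
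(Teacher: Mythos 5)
Your proposal is correct and follows essentially the same route as the paper: part (a) via the Chinese Remainder Theorem applied to the pairwise comaximal maximal ideals $\mathfrak{m}_i$ of the zero-dimensional algebra $A$, part (b) via $\bar{q}_i A \cong A/\mathfrak{m}_i$, and parts (c) and (d) via the decomposition $f = q_i f + p_i f$ (your idempotent identity $eA = \bigcap_{j:\, e\in\mathfrak{m}_j}\mathfrak{m}_j$ is just a structural repackaging of the paper's direct two-inclusion argument). You correctly isolate the one point the paper leaves implicit, namely that torsion-freeness plus reducedness gives $\mathfrak{p}_i\cap\mathbb{Z}=\langle 0\rangle$ and hence $\mathfrak{m}_i\cap R=\mathfrak{p}_i/I$.
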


\begin{proof}
Part~(a) is a direct consequence of the Chinese Remainder Theorem
(see Lemma~3.7.4 in~\cite{kreuzer2000computational}), and (b) follows from the fact
that $\bar{q}_iA$ is isomorphic to $A/\mathfrak{m}_i$.

To proof~(c), we notice that the ideal $\bar{q}_i R \cap R$ is contained in the right-hand side, since
$q_i \in \bigcap_{i\neq j} \mathfrak{m}_j$. To show the opposite inclusion,
let $f \in P$ such that $\bar{f} \in \bigcap_{i\neq j} \mathfrak{p}_j/I$. Then we have $f = q_if+p_if$.
Since $p_i \in \mathfrak{m}_i$, this implies $p_if \in \bigcap_{i=1,\dots,m} \mathfrak{p}_i = I$. Hence,
$\bar{f} = \bar{q}_i\bar{f} \in \bar{q}_i R \cap R$. Part~(d) follows analogously.
\end{proof}

Using these observations, we can adapt Algorithm~\ref{alg:unit_group_order} as follows.

\begin{cor}{\bf (Computing the Unit Group of a Reduced Torsion-Free Finite 
$\mathbb{Z}$-Algebra)}\label{alg:unit_group_torsion_free}\ \\
Let $R=P/I$ be a reduced torsion-free finite $\mathbb{Z}$-algebra. Consider the following sequence of instructions.

\begin{algorithmic}[1]
    \STATE Compute the minimal prime ideals $\mathfrak{p}_1, \dots, \mathfrak{p}_m$ of $I$.
    \FOR{$i=1, \dots, m$}
    \STATE Compute elements $q_i \in \bigcap_{i\neq j} \mathfrak{p}_j \mathbb{Q}[x_1, \dots, x_n]$ and
        $p_i \in \mathfrak{p}_i\mathbb{Q}[x_1, \dots, x_n]$ such that $q_i+p_i = 1$.
    \ENDFOR
    \STATE Using Remark~\ref{remark:integral}, compute a set of polynomials~$U$ such that their residue classes
       generate $(P/\mathfrak{p}_1)^\times$.
    \FOR{$j=2, \dots, m$}
            \STATE Form the ideal $J = \bigcap_{1 \leq i \leq j-1} \mathfrak{p}_i + \mathfrak{p}_j$.
                \STATE Using Remark~\ref{remark:integral}, compute polynomials $h_1, \dots, h_\ell$ such that their
                    residue classes generate $(P/\mathfrak{p}_j)^\times$.
                \STATE Assume that $U = \{g_1, \dots, g_k\}$, and compute a set of generators
                    $B \subseteq \mathbb{Z}^{k+\ell}$ of the exponent lattice of
                    $\left(\bar{g}_1, \dots, \bar{g}_k, \bar{h}_1^{-1}, \dots, \bar{h}_\ell^{-1}\right)$ in~$P/J$.
                \STATE Set $U = \left\{fg_1^{b_1}\cdots g_k^{b_k} + q_j h_1^{b_{k+1}} \cdots h_\ell^{b_{k+\ell}}
                    \mid b \in B \right\}$ where $f = q_1 + \cdots + q_{j-1}$.
            \ENDFOR
        \RETURN $U$.
\end{algorithmic}
This is an algorithm which computes a set of polynomials such that their residue classes generate~$R^\times$.
\end{cor}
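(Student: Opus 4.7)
The plan is to reduce the proof to Algorithm~\ref{alg:unit_group_order} by identifying the present algorithm as the concrete polynomial incarnation of that abstract version in the order $R$ sitting inside $A = \mathbb{Q}\otimes_{\mathbb{Z}} R$. First I would observe that, since $R$ is reduced and torsion-free, the canonical map $R\hookrightarrow A$ realizes $R$ as an order in the reduced zero-dimensional $\mathbb{Q}$-algebra $A$; this is exactly the setting where Algorithm~\ref{alg:unit_group_order} is applicable, and termination of the new algorithm then follows from termination of the ingredients (minimal prime computation, Remark~\ref{remark:integral}, and Algorithm~\ref{alg:exp_lattice_z-algebra}, noting that $P/J$ is a finite $\mathbb{Z}$-algebra because $J\supseteq I$).

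Next I would match the individual steps line by line against Algorithm~\ref{alg:unit_group_order}. Lines~1--4 compute the primitive idempotents of~$A$: by Lemma~\ref{lemma:torsion_free}(a) the residue classes $\bar q_1,\dots,\bar q_m$ are exactly these idempotents, so line~5 (combined with Lemma~\ref{lemma:torsion_free}(b) and Remark~\ref{remark:integral}) correctly produces generators of $(\bar q_1 R)^\times = (e_1\mathcal{O})^\times$ via the integral ring $P/\mathfrak{p}_1$. For the inductive loop at iteration~$j$, I would set $f = q_1+\cdots+q_{j-1}$ and check, using Lemma~\ref{lemma:torsion_free}(c),(d), that the ideal intersections in Algorithm~\ref{alg:unit_group_order} translate as
\[
\bar f R\cap R = \bigcap_{i<j}\mathfrak{p}_i/I \qquad\text{and}\qquad \bar q_j R\cap R = \mathfrak{p}_j/I,
\]
so that their sum is $\bigl(\bigcap_{i<j}\mathfrak{p}_i+\mathfrak{p}_j\bigr)/I$, matching the ideal $J$ formed in line~7.

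With that identification, line~9 computes precisely the exponent lattice required by Algorithm~\ref{alg:unit_group_order} in the quotient $\mathcal{O}/J \cong P/J$, and the combination formula in line~10 is the direct polynomial transcription of the unit $fg_1^{b_1}\cdots g_k^{b_k}+\bar q_j h_1^{b_{k+1}}\cdots h_\ell^{b_{k+\ell}}$ appearing in Algorithm~\ref{alg:unit_group_order}. An inductive argument on $j$ then shows that after the $j$-th pass $U$ generates the unit group of the order $(\bar q_1+\cdots+\bar q_j)R$ inside $(\bar q_1+\cdots+\bar q_j)A$; since $\bar q_1+\cdots+\bar q_m = 1$ holds in~$A$ and $R$ is an order in~$A$, the case $j=m$ yields generators of $R^\times$.

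The step I expect to be the main obstacle is the bookkeeping around the rational idempotents: while $R\subseteq A$ is torsion-free, the elements $\bar q_i$ themselves generally lie in $A\setminus R$, so one has to argue carefully that the sum $fg_1^{b_1}\cdots g_k^{b_k}+\bar q_jh_1^{b_{k+1}}\cdots h_\ell^{b_{k+\ell}}$, which a priori is only an element of $A$, actually has a representative in $R$ precisely when the exponent vector lies in the computed exponent lattice modulo $J$. This is exactly the content of Lemma~\ref{lemma:order}, and matching the congruence condition $\varphi_1(a_1)=\varphi_2(a_2)$ with membership in the exponent lattice of $(\bar g_1,\dots,\bar g_k,\bar h_1^{-1},\dots,\bar h_\ell^{-1})$ in $P/J$ is what closes the proof.
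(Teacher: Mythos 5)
Your proposal follows essentially the same route as the paper: identify $R$ as an order in $A=\mathbb{Q}\otimes_{\mathbb{Z}}R$, match the lines against Algorithm~\ref{alg:unit_group_order} using Lemma~\ref{lemma:torsion_free} for the primitive idempotents and the ideal $J$, and induct on $j$ so that after the $j$-th pass $U$ generates $\bigl((\bar q_1+\cdots+\bar q_j)R\bigr)^\times$; your closing remark that Lemma~\ref{lemma:order} is what turns the congruence condition into the exponent-lattice condition is exactly the content of the proof of Algorithm~\ref{alg:unit_group_order}, which the corollary inherits. One harmless slip: by Lemma~\ref{lemma:torsion_free}(c),(d) the two identifications are transposed --- it is $\bar q_j R\cap R=\bigcap_{i<j}\mathfrak{p}_i/I$ and $\bar f R\cap R=\mathfrak{p}_j/I$ --- but since only their sum enters the definition of $J$, the argument is unaffected.
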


\begin{proof}
Since $R$ is torsion-free and reduced, it is an order in the zero-dimensional $\mathbb{Q}$-algebra
$\mathbb{Q} \otimes_{\mathbb{Z}} R$. Now we show that the steps of this algorithm correspond to the steps
of Algorithm~\ref{alg:unit_group_order}.

Assume that at the start of the $j$-th iteration of the for-loop in lines 6--11 the residue classes of the
elements in~$U$ generate the unit group of $R_{j-1} = P/(\mathfrak{p}_1 \cap \cdots \cap \mathfrak{p}_{j-1})$.
Consider the finite $\mathbb{Z}$-algebra $R_j = P/(\mathfrak{p}_1 \cap \cdots \cap \mathfrak{p}_j)$. The
residue classes of the elements $f = q_1 + \cdots + q_{j-1}$ and~$q_j$ form orthogonal idempotents in
$\mathbb{Q} \otimes_{\mathbb{Z}} R_j$ with $\bar{f}+\bar{q}_j = 1$. By Lemma~\ref{lemma:torsion_free}.b we then
have $\bar{q}_j R_j \cong \bar{q}_j (P/\mathfrak{p}_j)$ and $\bar{f} R_j \cong \bar{f} R_{j-1}$. Furthermore
Lemma~\ref{lemma:torsion_free}.c yields $\bar{q}_j R_j \cap R_j = (\mathfrak{p}_1 \cap \cdots \cap \mathfrak{p}_{j-1})/I$
and $\bar{f} R_j \cap R_j = \mathfrak{p}_j/I$. This shows that the ideal~$J$ in line~7 corresponds to the
ideal~$J$ in line~5 of Algorithm~\ref{alg:unit_group_order}.
\end{proof}

Let us apply this algorithm to a concrete example.

\begin{exa}
Let $I = \left\langle x^2+x+1, y^2+y+1, z^2+z+1 \right\rangle$, and consider the finite $\mathbb{Z}$-algebra
$R = \mathbb{Z}[x,y,z]/I$. We follow the steps of Corollary~\ref{alg:unit_group_torsion_free} to compute
generators of~$R^\times$.
\begin{enumerate}
\item[1:] We compute the minimal prime ideals of $I$ and obtain
  \begin{align*}
     \mathfrak{p}_1 &= \left\langle y - z, x + z + 1, z^2 + z + 1 \right\rangle \\
     \mathfrak{p}_2 &= \left\langle y - z, x - z, z^2 + z + 1 \right\rangle \\
     \mathfrak{p}_3 &= \left\langle y + z + 1, x + z + 1, z^2 + z + 1 \right\rangle \\
     \mathfrak{p}_4 &= \left\langle y + z + 1, x - z, z^2 + z + 1 \right\rangle
  \end{align*}

\item[2--4:] We compute the primitive idempotents
                \begin{align*}
                    e_1 &= 1/3\bar{x}\bar{y} + 1/3\bar{x}\bar{z} - 1/3\bar{y}\bar{z} + 1/3\bar{x} + 1/3 \\
                    e_2 &= -1/3\bar{x}\bar{y} - 1/3\bar{x}\bar{z} - 1/3\bar{y}\bar{z} - 1/3\bar{x} - 1/3\bar{y} - 1/3\bar{z}\\
                    e_3 &= -1/3\bar{x}\bar{y} + 1/3\bar{x}\bar{z} + 1/3\bar{y}\bar{z} + 1/3\bar{z} + 1/3 \\
                    e_4 &= 1/3\bar{x}\bar{y} - 1/3\bar{x}\bar{z} + 1/3\bar{y}\bar{z} + 1/3\bar{y} + 1/3
                \end{align*}
                of the zero-dimensional $\mathbb{Q}$-algebra $\mathbb{Q} \otimes_{\mathbb{Z}} R$.

\item[5:] Using Remark~\ref{remark:integral}, we determine a set of generators $U= \{ \bar{z}+1 \}$ of
                $(R/\mathfrak{p}_1)^\times$.

\item[7:] Form the ideal $J = \mathfrak{p}_1+ \mathfrak{p}_2 = \langle x+1, y+2, z+2, 3 \rangle$.

\item[8:] Using Remark~\ref{remark:integral}, we
                compute $(R/\mathfrak{p}_2)^\times = \langle \bar{z}+1 \rangle$.

\item[9:] Using Remark~\ref{remark:exp_lattice_fields}, we compute generators $(1,1)$ and $(0,2)$ of the
                exponent lattice of $(\bar{z}+1, (\bar{z}+1)^{-1})$ in the finite field $R/J$.

\item[10:] Compute $e_1 (\bar{z}+1) + e_2 (\bar{z}+1) = \bar{z}+1$ and
                $e_1+e_2(\bar{y}+1)^2 = -\bar{y}\bar{z}-\bar{y}$ and set $U = \{\bar{z}+1, -\bar{y}\bar{z}-\bar{y}\}$.

\item[7:] Form the ideal $J = (\mathfrak{p}_1 \cap \mathfrak{p}_2)+\mathfrak{p}_3 = \langle x+2, y+2, z+2, 3 \rangle$.

\item[8:] Using Remark~\ref{remark:integral}, we
                compute $(R/\mathfrak{p}_2)^\times = \langle \bar{z}+1 \rangle$.

\item[9:] Using Remark~\ref{remark:exp_lattice_fields}, we compute generators $(1,0,1)$, $(0,1,0)$ and
                $(0,0,2)$ of the exponent lattice of $(\bar{z}+1, -\bar{y}\bar{z}-\bar{y}, (\bar{z}+1)^{-1})$ in~$R/J$.

\item[10:] Set $U = \{\bar{z}+1, -\bar{y}\bar{z}-\bar{y}, \bar{x}\bar{z}+\bar{x}+\bar{z}+1\}$.

\item[7:] Form the ideal $J = (\mathfrak{p}_1 \cap \mathfrak{p}_2 \cap \mathfrak{p}_3)+\mathfrak{p}_4 =
                \langle z^2 + z + 1, x + 2z, y + z + 1, 3\rangle$.

\item[8:] Using Remark~\ref{remark:integral}, we
                compute $(R/\mathfrak{p}_2)^\times = \langle \bar{z}+1 \rangle$.

\item[9:] Using Remark~\ref{remark:exp_lattice_fields}, we compute generators $(1,0,0,1)$, $(0,1,0,2)$,
                $(0,0,1,2)$ and $(0,0,0,6)$ of the exponent lattice in~$R/J$ of
                $$
                \left(\bar{z}+1, -\bar{y}\bar{z}-\bar{y}, \bar{x}\bar{z}+\bar{x}+\bar{z}+1,(\bar{z}+1)^{-1}\right).
                $$

\item[10:] Set $U = \{\bar{z}+1, -\bar{y}\bar{z}-\bar{y}, \bar{x}\bar{z}+\bar{x}+\bar{z}+1, 1\}$.

\item[12:] The algorithm returns the generators
                $\bar{z}+1, -\bar{y}\bar{z}-\bar{y}, \bar{x}\bar{z}+\bar{x}+\bar{z}+1$ of~$R^\times$.

\end{enumerate}
\end{exa}

A reduced finite $\mathbb{Z}$-algebra need not be torsion-free, but we can decompose it into a direct product of
finitely many finite fields and a torsion-free algebra.

\begin{lem}\label{lemma:reduction_to_torsion_free}
Let $R$ be a reduced finite $\mathbb{Z}$-algebra. Let $\mathfrak{p}_1, \dots, \mathfrak{p}_r$ be the minimal
prime ideals of~$R$ of height $n$, let $\mathfrak{m}_1, \dots, \mathfrak{m}_{s}$ be the maximal
ideals of~$R$, and let $J = \mathfrak{p}_1 \cap \cdots \cap \mathfrak{p}_r$. Then $R/J$ is torsion-free and we
have $R \cong R/J \times R/\mathfrak{m}_1 \times \cdots \times R/\mathfrak{m}_s$.
\end{lem}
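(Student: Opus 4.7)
The plan is to exploit the dichotomy for minimal primes of a reduced finite $\mathbb{Z}$-algebra and then apply the Chinese Remainder Theorem. Since $R$ is a finite $\mathbb{Z}$-module, each minimal prime $\mathfrak{p}$ of $R$ makes $R/\mathfrak{p}$ a domain that is integral over $\mathbb{Z}/(\mathfrak{p}\cap\mathbb{Z})$. Going up and incomparability then force a dichotomy: either $\mathfrak{p}\cap\mathbb{Z}=(0)$, in which case $R/\mathfrak{p}$ is an order in a number field and $\mathfrak{p}$ corresponds to a prime of height $n$ in $P$; or $\mathfrak{p}\cap\mathbb{Z}=(p)$ for some rational prime $p$, in which case $R/\mathfrak{p}$ is a finite integral extension of $\mathbb{F}_p$, hence a finite field, and $\mathfrak{p}$ is maximal in $R$. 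Consequently, the $\mathfrak{p}_i$ together with the $\mathfrak{m}_j$ exhaust the minimal primes of $R$, and reducedness yields
$$
J\cap\mathfrak{m}_1\cap\cdots\cap\mathfrak{m}_s \;=\; \mathfrak{p}_1\cap\cdots\cap\mathfrak{p}_r\cap\mathfrak{m}_1\cap\cdots\cap\mathfrak{m}_s \;=\; (0).
$$

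For the torsion-freeness of $R/J$, I would observe that the canonical ring homomorphism $R/J\hookrightarrow R/\mathfrak{p}_1\times\cdots\times R/\mathfrak{p}_r$ is injective by definition of $J$. Each factor $R/\mathfrak{p}_i$ embeds as a $\mathbb{Z}$-module into the number field $\mathbb{Q}\otimes_{\mathbb{Z}}(R/\mathfrak{p}_i)$ and is therefore torsion-free, so the product is torsion-free; any $\mathbb{Z}$-submodule of it, in particular $R/J$, inherits torsion-freeness.

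To obtain the product decomposition I would apply the Chinese Remainder Theorem to the ideals $J,\mathfrak{m}_1,\ldots,\mathfrak{m}_s$. Distinct $\mathfrak{m}_i,\mathfrak{m}_j$ are automatically comaximal because both are maximal. For comaximality of $J$ and $\mathfrak{m}_j$, the key point is that $\mathfrak{m}_j$ is itself a minimal prime: if we had $J\subseteq\mathfrak{m}_j$, then by prime avoidance some $\mathfrak{p}_i\subseteq\mathfrak{m}_j$, contradicting the minimality of $\mathfrak{m}_j$ (equivalently, $\mathfrak{p}_i\cap\mathbb{Z}=(0)$ while $\mathfrak{m}_j\cap\mathbb{Z}\neq(0)$). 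Hence $J\not\subseteq\mathfrak{m}_j$, and maximality of $\mathfrak{m}_j$ gives $J+\mathfrak{m}_j=R$. Combined with the nullity of the total intersection from the first paragraph, the Chinese Remainder Theorem delivers $R\cong R/J\times R/\mathfrak{m}_1\times\cdots\times R/\mathfrak{m}_s$.

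The only delicate ingredient is the structural dichotomy for minimal primes, together with the implicit reading that the $\mathfrak{m}_j$ appearing in the statement are exactly those minimal primes of $R$ meeting $\mathbb{Z}$ nontrivially (and that these are precisely the maximal ideals of $R$ which are themselves minimal primes). This is a standard consequence of integrality of finite $\mathbb{Z}$-algebras and is already used implicitly in Remark~\ref{remark:integral}; once it is in place, both claims reduce to the routine comaximality and embedding arguments above.
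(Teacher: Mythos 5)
Your proof is correct and follows essentially the same route as the paper's: comaximality of $J$ with each $\mathfrak{m}_j$ (under the intended reading that the $\mathfrak{m}_j$ are the minimal primes of $R$ that are maximal) plus the Chinese Remainder Theorem for the product decomposition, and torsion-freeness of $R/J$ via the embedding of $R/J$ into the characteristic-zero quotients $R/\mathfrak{p}_i$, which is the same mechanism as the paper's saturation argument $\mathfrak{p}_i\mathbb{Q}[x_1,\dots,x_n]\cap P=\mathfrak{p}_i$. The only cosmetic slip is calling ``a prime containing a finite intersection of ideals contains one of them'' prime avoidance; the fact you use is correct.
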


\begin{proof}
As a reduced ring, $R$ does not have embedded prime ideals. Therefore $J$ is not contained in any
$\mathfrak{m}_i$. Since $\mathfrak{m}_1, \dots, \mathfrak{m}_s$ are maximal ideals the isomorphism follows
directly by the Chinese Remainder Theorem. To prove that $R/J$ is torsion-free, we note that the prime ideals
$\mathfrak{p}_i$ satisfy $\mathfrak{p}_i \mathbb{Q}[x_1, \dots, x_n] \cap P$. Now let $f \in P$ and assume
there exists $k \in \mathbb{Z} \setminus \{0\}$ with $kf \in J$. Then we have $kf \in \mathfrak{p}_i$ and by
the above observation $f \in \mathfrak{p}_i$ for all $i=1, \dots, k$. This shows $f \in J$.
\end{proof}

Thus we can now combine our results and obtain the following algorithm.

\begin{algo}{\bf (Computing the Unit Group of a Reduced Finite $\mathbb{Z}$-Algebra)}\label{alg:units_reduced}
Let $R = P/I$ be a reduced finite $\mathbb{Z}$-algebra. Consider the following sequence of instructions. 

\begin{algorithmic}[1]
            \STATE Let $G =  [\;]$.
            \STATE Compute the prime decomposition $I = \mathfrak{p}_1 \cap \cdots \cap \mathfrak{p}_r \cap
                \mathfrak{m}_1 \cap \cdots \cap \mathfrak{m}_s$ where $\mathfrak{p}_1, \dots, \mathfrak{p}_r$ are
                prime ideals of height $n$ and $\mathfrak{m}_1, \dots, \mathfrak{m}_s$ are maximal ideals.
            \STATE Compute $J = \mathfrak{p}_1 \cap \cdots \cap \mathfrak{p}_r$.
            \STATE Apply Algorithm~\ref{alg:unit_group_order} to compute a set of polynomials $H$ such that their
                residue classes generate the unit group of the order $P/J$ in
                $\mathbb{Q}[x_1, \dots, x_n]/J \mathbb{Q}[x_1, \dots, x_n]$.

            \STATE Using the Chinese Remainder Theorem, compute $e_1, \dots, e_{s+1} \in P$ such that their residue
                classes form orthogonal idempotents of $R$ with $R \cong \bar{e}_1 R \times \cdots \times \bar{e}_{s+1} R$
                and $\bar{e}_i R \cong P/\mathfrak{m}_i$ for $i=1, \dots, s$ and $\bar{e}_{s+1}R \cong P/J$.
            \FORALL{$h \in H$}
            \STATE Add $\bar{e}_{s+1} \bar{h} + \sum_{i=1}^{s} \bar{e}_i$ to~$G$.
            \ENDFOR
            \FOR{$i=1, \dots,s$}
                \STATE Using Remark~\ref{remark:integral}, compute $g_i \in P$ such that $\bar{g}_i$ generates
                    $(P/\mathfrak{m}_i)^\times$.
                \STATE Add $\bar{e}_i\bar{g}_i + \sum_{j \neq i} \bar{e}_j$ to $G$.
            \ENDFOR
            \RETURN $G$.
\end{algorithmic}
This is an algorithm which computes a generating set of the unit group $R^\times$.
\end{algo}

\begin{proof}
By Lemma~\ref{lemma:reduction_to_torsion_free}, we have
$R^\times \cong (P/J)^\times \times (P/\mathfrak{m}_1)^\times \times \cdots \times (P/\mathfrak{m}_s)^\times$,
and $P/J$ is torsion-free. This shows that the units in $R$ are given by the residue classes of elements of the
form $e_1f_1+\cdots +e_s f_s+e_{s+1}g$ where $\bar{f}_i$ is a unit in $(P/\mathfrak{m}_i)$ and $\bar{h}$ is a
unit in~$(P/J)$. We therefore conclude that the elements computed in line~7 and in line~11 generate~$R^\times$.
\end{proof}

As noted in Remark~\ref{remark:complexity}, the computations in lines~2,3 and~5 can be performed efficiently
if $R$ is explicitly given.

\begin{exa}
Consider the reduced finite $\mathbb{Z}$-algebra $R =\mathbb{Z}[x,y,z]/I$ where
$$
I = \left\langle 3x, \, xz - x, \, y^2 + z, \, x^2 + xy, \, z^3 - 1 \right\rangle.
$$
Let us apply Algorithm~\ref{alg:units_reduced} to compute generators of $R^\times$.
\begin{enumerate}
\item[1:] Set $G = [\;]$.

\item[2:] We compute the minimal prime ideals of~$I$ and obtain
                $\mathfrak{p}_1 = \langle z - 1, x, y^2 + 1 \rangle$,
                $\mathfrak{p}_2 = \langle x, z^2 + z + 1, y^2 + z \rangle$ and
                $\mathfrak{m} = \langle 3, z - 1, x + y, y^2 + 1 \rangle$.

\item[3:] Compute $J = \mathfrak{p}_1 \cap \mathfrak{p}_2$.

\item[4:] Using Algorithm~\ref{alg:unit_group_order}, we obtain the following generators of $(P/J)^\times$.
                \begin{align*}
                    h_1 &= 9\bar{y}\bar{z}^2 - 17\bar{y}\bar{z} - 15\bar{z}^2 + 9\bar{y} + 15\bar{z} \\
                    h_2 &= 15\bar{y}\bar{z} + 9\bar{z}^2 - 15\bar{y} - 17\bar{z} + 9 \\
                    h_3 &= -56\bar{y}\bar{z} - 32\bar{z}^2 + 56\bar{y} + 65\bar{z} - 32
                \end{align*}

\item[5:] Determine orthogonal idempotents $e_1 = \bar{x}\bar{y} + \bar{y}^2 + \bar{z}$ and
                $e_2 = -\bar{x}\bar{y} + 1$ of~$R$ such that $e_1+e_2 = 1$, $e_1 R \cong P/\mathfrak{m}$ and
                $e_2 R \cong P/J$.

\item[7:] Add $e_1+e_2h_1$, $e_1+e_2h_2$ and $e_1+e_2h_3$ to $G$.

\item[10:] Using Remark~\ref{remark:integral}, we compute a generator $g = \bar{y}+1$ of $(P/\mathfrak{m})^\times$.

\item[11:] Add $e_1g+e_2$ to $G$.

\item[13:] The generators of $R^\times$ are given by
                \begin{align*}
                    e_1+e_2h_1 &= 9\bar{y}\bar{z}^2 + \bar{x}\bar{y} - 17\bar{y}\bar{z} - 15\bar{z}^2 + \bar{x} + 9\bar{y} + 15\bar{z} \\
                    e_1+e_2h_2 &= 15\bar{y}\bar{z} + 9\bar{z}^2 - 15\bar{y} - 17\bar{z} + 9\\
                    e_1+e_2h_3 &= -56\bar{y}\bar{z} - 32\bar{z}^2 + 56\bar{y} + 65\bar{z} - 32 \\
                    e_1g+e_2 &= -\bar{x}+1.
                \end{align*}

\end{enumerate}
\end{exa}

\bigskip\bigbreak
%%%%%%%%%%%%%%%%%%%%%%%%%%%%%%%%%%%%%%%%%%%%%%%%%%%%%%
%
%  Section 5: Reduction to the Reduced Case
%
%%%%%%%%%%%%%%%%%%%%%%%%%%%%%%%%%%%%%%%%%%%%%%%%%%%%%%

\section{The Unit Group of Non-Reduced Finite \texorpdfstring{$\Z$}{Z}-Algebras}

Using the results of the previous subsection, in particular Algorithm~\ref{alg:units_reduced}, we can now compute
generators of the unit group of $R/\Rad(0)$. The next task is to lift these generators to a generating set of $R^\times$.

\begin{lem}\label{lemma:canonical_epi}
Let $R$ be a finite $\mathbb{Z}$-algebra.
\begin{enumerate}
\item[(a)] The canonical homomorphism $\varphi \colon R \rightarrow R/\Rad(0)$ induces a surjective group
homomorphism $\varphi^\times \colon R^\times \rightarrow (R/\Rad(0))^\times$.

\item[(b)] The kernel of $\varphi^\times$ is given by $1+ \Rad(0)$.

\end{enumerate}
\end{lem}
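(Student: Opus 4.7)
The plan is to exploit the fact that $R$, being finitely generated as a $\mathbb{Z}$-module, is Noetherian, so that the nilradical $N = \Rad(0)$ is nilpotent: there exists $k \in \mathbb{N}$ with $N^k = 0$. This is the key structural fact behind both parts.

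For part (a), I would first observe that every ring homomorphism carries units to units, so $\varphi$ restricts to a group homomorphism $\varphi^\times \colon R^\times \to (R/N)^\times$. The substantial claim is surjectivity. Given $\bar{u} \in (R/N)^\times$, pick a lift $u \in R$ and a lift $v \in R$ of its inverse, so that $uv = 1 - n$ for some $n \in N$. Because $n^k = 0$, the element $1 - n$ is a unit in $R$ with inverse $\sum_{i=0}^{k-1} n^i$. Hence $uv$ is a unit in $R$, forcing $u$ itself to be a unit. Then $\varphi^\times(u) = \bar{u}$, giving surjectivity.

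For part (b), the argument is short and essentially bookkeeping once part (a) is in place. An element $u \in R^\times$ lies in $\ker(\varphi^\times)$ if and only if $\varphi(u) = 1$, which is equivalent to $u - 1 \in N$, i.e. $u \in 1 + N$. It remains to check the reverse inclusion $1 + N \subseteq R^\times$, but this follows from the geometric-series inverse $(1+n)^{-1} = \sum_{i=0}^{k-1}(-n)^i$ used above, so every element of $1 + N$ really is a unit. Combining, $\ker(\varphi^\times) = 1 + N$.

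I do not expect a serious obstacle here; the one point that must not be glossed over is the nilpotence of $\Rad(0)$, which relies on $R$ being Noetherian as a ring (this follows from $R$ being finitely generated as a $\mathbb{Z}$-module, since $\mathbb{Z}$ is Noetherian and any finitely generated module over a Noetherian ring is Noetherian, in particular so is $R$ itself as a ring). Without this, the argument for surjectivity in (a) and for the reverse containment in (b) would both fail.
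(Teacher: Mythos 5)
Your argument is correct, and it is a genuine proof where the paper simply cites the literature (Lemma~1.1.5 of Malek, \emph{On the group of units of a finite $R$-algebra}); so you have supplied the standard self-contained argument: units map to units, a lift $u$ of a unit satisfies $uv = 1-n$ with $n$ nilpotent, hence $uv$ and therefore $u$ is a unit, and the kernel computation in (b) is immediate once one knows $1+\Rad(0)\subseteq R^\times$. One small correction to your closing remark: the Noetherian hypothesis and the nilpotence of the \emph{ideal} $\Rad(0)$ are not actually needed anywhere. Every individual element $n\in\Rad(0)$ is nilpotent by the very definition of the nilradical in a commutative ring, and that is all your geometric-series inverses $\sum_i n^i$ and $\sum_i(-n)^i$ require (each sum is finite because that particular $n$ satisfies $n^{k_n}=0$ for some $k_n$ depending on $n$). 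So the lemma, with your proof, holds for an arbitrary commutative ring, and your claim that the argument ``would fail'' without nilpotence of the ideal is an overstatement --- it is the elementwise nilpotence that does the work, and that comes for free.
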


\begin{proof}
 See \cite{malek1972group}, Lemma 1.1.5.
\end{proof}

This lemma shows $R^\times/(1+\Rad(0)) \cong (R/\Rad(0))^\times$. Therefore, if we have generators of
$(R/\Rad(0))^\times$ and generators of $1+\Rad(0)$, we obtain generators of~$R^\times$ using the following
well-known result.

\begin{rem}
Let $M$ be a $\mathbb{Z}$-module and $N \subseteq M$ a submodule. Assume that the residue classes
of $m_1, \dots, m_k \in M$ generate $M/N$, and let $N$ be generated by $n_1, \dots, n_\ell$. Then
$m_1, \dots, m_k, n_1, \dots, n_\ell$ generate $M$.
\end{rem}

Since we already saw how to determine generators of $R/\Rad(0)^\times$, it remains to compute generators
of $1+\Rad(0)$. If $\Rad(0)^2 = 0$, then elements $1+f, 1+g$ of~$1+ \Rad(0)$ satisfy $(1+f)(1+g) \equiv 1+f+g$.
In this case generators of the additive group of $\Rad(0)$, immediately yield generators of $1+\Rad(0)$.
If the nilpotency index of $\Rad(0)$ is greater than 2, we can inductively compute generators of $1+ \Rad(0)^{i-1}$
in~$R/\Rad(0)^i$.

\begin{algo}{\bf (Computing Generators of the Unit Group)}\label{alg:unit_group_gens}\ \\
Let $R = P/I$ be a finite $\mathbb{Z}$-algebra. Consider the following sequence of instructions.

\begin{algorithmic}[1]
            \STATE Let $G =  [\;]$.
            \STATE Compute the nilradical $\Rad(0)$ of $R$.
            \STATE Apply Algorithm~\ref{alg:units_reduced} to compute elements in $R$ such that their residue classes
                generate the unit group of $R/\Rad(0)$. Add these elements to $G$.
            \STATE Compute the nilpotency index~$s$ of~$\Rad(0)$ in $R$.
            \FOR{$i=2, \dots, s$}
                \STATE Compute $f_1, \dots, f_{\ell} \in R$ such that the residue classes
                $\bar{f}_1, \dots, \bar{f}_{\ell}$ in $R/\Rad(0)^i$ generate~$1+ \Rad(0)^{i-1}$.
                \STATE Add $f_1, \dots, f_{\ell}$ to~$G$.
            \ENDFOR
            \RETURN $G$.
\end{algorithmic}
This is an algorithm which computes a generating set of the unit group $R^\times$.
\end{algo}
    
\begin{proof}
After line~3 has been executed the list $G$ contains generators of the unit group of $R/\Rad(0)$. Now assume
that the elements in $G$ generate $R/\Rad(0)^{i-1}$ after iteration number $i-1$ of lines 5--8. By
Lemma~\ref{lemma:canonical_epi} the group homomorphism
$$
\varphi_i^\times \colon \left(R/\Rad(0)^{i-1}\right)^\times \rightarrow \left(R/\Rad(0)^{i}\right)^\times
$$
is surjective and its kernel is given by $1+\Rad(0)^{i-1}$. This implies
$$
\left(R/\Rad(0)^{i-1}\right)^\times/\left(1+\Rad(0)^{i-1}\right) \cong \left(R/\Rad(0)^i\right)^\times.
$$
The generators of $R/\Rad(0)^{i-1}$ together with the generators of $1+\Rad(0)^{i-1}$ therefore generate
$\left(R/\Rad(0)^i\right)^\times$. Hence, after iteration number $s$ the list $G$ contains generators of
$\left(R/\Rad(0)^s\right)^\times = R^\times$.
\end{proof}

The loop in lines 5--8 can be avoided as follows.

\begin{lem}\label{lemma:rad_Macaulay}
Let $R = P/I$ be a finite $\mathbb{Z}$-algebra, and let $\mathcal{G} = \mathbb{T}^n \setminus L$ be a set of
terms as in Lemma~\ref{lemma:macaulay} such that the residue classes of the elements in $\mathcal{G}$
generate~$R^+$. Let $f_1, \dots, f_k \in P$ such that their residue classes generate $\Rad(0)$. Then
$\mathcal{H} = \left\{ 1+ \bar{t}\bar{f}_i \mid i=1, \dots, k, \; t \in \mathcal{G} \right\}$ generates
$1+\Rad(0)$.
\end{lem}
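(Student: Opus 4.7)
The plan is to prove $1 + \Rad(0) \subseteq \langle \mathcal{H} \rangle$ by descending induction on $j$ from the nilpotency index $s$ of $\Rad(0)$ down to $1$, establishing the nested inclusions $1 + \Rad(0)^j \subseteq \langle \mathcal{H} \rangle$ at each stage. The base $j = s$ is trivial since $\Rad(0)^s = 0$ forces $1 + \Rad(0)^s = \{1\}$. For the inductive step from $j+1$ down to $j$, given $u = 1 + r \in 1 + \Rad(0)^j$, the task reduces to exhibiting $p \in \langle \mathcal{H} \rangle$ with $u p^{-1} \in 1 + \Rad(0)^{j+1}$; the inductive hypothesis then places $u p^{-1}$, and hence $u$, in $\langle \mathcal{H} \rangle$.

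Two structural ingredients drive the construction of $p$. First, the set $S := \{\bar{t}\bar{f}_i : t \in \mathcal{G},\, i = 1, \dots, k\}$ spans $\Rad(0)$ as a $\mathbb{Z}$-module: since the $\bar{f}_i$ generate $\Rad(0)$ as an ideal, every element of $\Rad(0)$ has the form $\sum_i \bar{r}_i \bar{f}_i$ with $\bar{r}_i \in R$, and Proposition~\ref{lemma:macaulay} rewrites each $\bar{r}_i$ as a $\mathbb{Z}$-linear combination of terms in $\mathcal{G}$. Second, for each $k \geq 1$ the assignment $1 + a \mapsto a$ induces a group isomorphism $(1 + \Rad(0)^k)/(1 + \Rad(0)^{k+1}) \cong \Rad(0)^k/\Rad(0)^{k+1}$, because the cross term $ab$ in $(1+a)(1+b) = 1 + a + b + ab$ lies in $\Rad(0)^{2k} \subseteq \Rad(0)^{k+1}$.

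Using these ingredients one builds $p$ by iterative correction through the filtration. Write $r = \sum_\ell c_\ell\, \bar{t}_\ell \bar{f}_{i_\ell}$ and set $p_1 := \prod_\ell (1 + \bar{t}_\ell \bar{f}_{i_\ell})^{c_\ell} \in \langle \mathcal{H} \rangle$; expansion shows $p_1 \equiv 1 + r \pmod{\Rad(0)^2}$, so $u p_1^{-1} \in 1 + \Rad(0)^2$. If $j = 1$ the inductive hypothesis already finishes the job; otherwise one repeats the construction on $u p_1^{-1}$ to produce $p_2 \in \langle \mathcal{H} \rangle$ with $u p_1^{-1} p_2^{-1} \in 1 + \Rad(0)^3$, and iterates, pushing the remainder one filtration layer deeper at each step until it lands in $1 + \Rad(0)^{j+1}$.

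The hard part is justifying that each iteration truly advances the filtration depth by one, rather than stalling at $1 + \Rad(0)^2$: a naive expansion $\prod(1 + s_\ell)^{c_\ell}$ only controls terms modulo $\Rad(0)^2$, while the spurious cross terms $s_\ell s_{\ell'}$ lie a priori in $\Rad(0)^2$ and not in the required deeper layer. This obstruction is dissolved by combining the spanning property at higher depths with a Möbius-type identity showing that for every monomial $s_{\ell_1} \cdots s_{\ell_k}$ with $s_{\ell_i} \in S$ the element $1 + s_{\ell_1}\cdots s_{\ell_k}$ is congruent modulo $\Rad(0)^{k+1}$ to an appropriate signed product of the factors $\prod_{i \in T}(1 + s_{\ell_i})$ over subsets $T \subseteq \{1,\dots,k\}$, each of which lies in $\mathcal{H}$. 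Together these give the required one-level deepening at each correction, and the descending induction closes the argument.
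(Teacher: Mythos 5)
Your skeleton (descending induction along the filtration $1+\Rad(0)^j$, reduction to one-step congruences modulo $\Rad(0)^{j+1}$) is the same as the paper's, and you correctly locate the crux: the naive product $\prod_\ell(1+s_\ell)^{c_\ell}$ only matches $1+r$ modulo $\Rad(0)^2$, so the correction process threatens to stall there. But the device you invoke to get past this --- a ``M\"obius-type identity'' expressing $1+s_{\ell_1}\cdots s_{\ell_k}$ modulo $\Rad(0)^{k+1}$ as a signed product of the factors $\prod_{i\in T}(1+s_{\ell_i})$ over subsets $T$ --- does not exist in a commutative ring. Since $R$ is commutative, any such signed product collapses to $\prod_i(1+s_{\ell_i})^{n_i}$ for integers $n_i$, which equals $1+\sum_i n_i s_{\ell_i}+O(\Rad(0)^2)$; for this to be congruent to $1+s_{\ell_1}\cdots s_{\ell_k}$ modulo $\Rad(0)^{k+1}$ the linear part must vanish into $\Rad(0)^2$, and then the product is congruent to $1$, not to $1+s_{\ell_1}\cdots s_{\ell_k}$. (Already for $k=2$: $(1+a)^{n_1}(1+b)^{n_2}=1+n_1a+n_2b+\cdots$ can never universally produce $1+ab$ modulo $\Rad(0)^3$.) Identities of the kind you describe are commutator/Hall--Petrescu phenomena in non-abelian nilpotent groups; here $1+\Rad(0)$ is abelian and they degenerate to the trivial identity.

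The gap is not repairable within your framework, because your argument uses only that $S=\{\bar{t}\bar{f}_i\}$ spans $\Rad(0)$ as a $\mathbb{Z}$-module, and the statement is false at that level of generality. Take $R=\mathbb{Z}[x]/\langle 4x,\,2x^2,\,x^3\rangle$ and the spanning set $S'=\{\bar{x},\,2\bar{x}+\bar{x}^2\}$ of $\Rad(0)$: then $1+2\bar{x}+\bar{x}^2=(1+\bar{x})^2$, so $\{1+v\mid v\in S'\}$ generates the cyclic group $\langle 1+\bar{x}\rangle$ of order $4$, whereas $1+\Rad(0)$ has order $8$ (the element $1+\bar{x}^2$ is missed). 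So any correct proof must exploit the specific shape of $\mathcal{H}$, namely that $t$ runs over a set $\mathcal{G}$ spanning all of $R^+$ (not merely $\Rad(0)$): this is what guarantees, level by level, that an element of $\Rad(0)^{i-1}$, written as $\sum_j u_j\bar{f}_j$ with $u_j\in\Rad(0)^{i-2}$, becomes a $\mathbb{Z}$-combination of elements $\bar{t}\bar{f}_j$ compatible with the filtration --- the ingredient the paper's proof leans on and your argument never uses. I recommend reworking the inductive step so that the representation of the remainder at depth $j$ is produced by this mechanism rather than by an arbitrary spanning representation.
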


\begin{proof}
Clearly, $\mathcal{H}$ is contained in $1+\Rad(0)$. By the proof of Algorithm~\ref{alg:unit_group_gens}, it is
then enough to show that there are $g_1, \dots, g_s \in H$ such that $\bar{g}_1, \dots, \bar{g}_s$
in~$R/\Rad(0)^i$ generate $1+ \Rad(0)^{i-1}$. Every element in $\Rad(0)^{i-2}$ can be written as a
$\mathbb{Z}$-linear combination of the residue classes of the terms in $\mathcal{G}$. Thus, every element
in $\Rad(0)^{i-1}$ can be written as a $\mathbb{Z}$-linear combination of the terms in
$\left\{\bar{t}\bar{f}_i \mid i=1, \dots, k, \; t \in \mathcal{G} \right\}$. The claim now follows from the fact
that $(1+\bar{f})(1+\bar{g}) = 1+\bar{f}+\bar{g}$ for $f,g \in \Rad(0)^{i-1}$, since $fg \in \Rad(0)^i$.
\end{proof}

Let us apply Algorithm~\ref{alg:unit_group_gens} using this simplification to a concrete example.

\begin{exa}
Consider the finite $\mathbb{Z}$-algebra $R = \mathbb{Z}[x,y]/I$, where $I=\left\langle x^3, 6x^2, y^2+y+1 \right\rangle$.
\begin{enumerate}
\item[1:] Let $G = [\;]$.

\item[2:] Compute $\Rad(0) = \left\langle \bar{x}, \bar{y}^2+\bar{y}+1 \right\rangle$.

\item[3:] Using Algorithm~\ref{alg:units_reduced}, we compute
                $(R/\Rad(0))^\times = \langle \bar{y}+1 \rangle$ and add $\bar{y}+1$ to $G$.

\item[4-8:] Using Lemma~\ref{lemma:macaulay}, we determine $\mathbb{Z}$-module generators
                $\mathcal{G} = \left\{\bar{x}^2\bar{y}, \bar{x}\bar{y}, \bar{y}, \bar{x}^2, \bar{x}, 1\right\}$ of~$R$.
                Then for every generator $g$ of~$\Rad(0)$ and for every term $t \in \mathcal{G}$ we calculate
                $1+\NF_{\sigma, I}(tg)$ and obtain generators
                $1+\bar{x}, 1+\bar{x}^2, 1+\bar{x}\bar{y}, 1+\bar{x}^2\bar{y}$ of $1+\Rad(0)$.

\item[9:] We obtain
$$
R^\times = \left\langle 1+\bar{y}, 1+\bar{x}, 1+\bar{x}^2, 1+\bar{x}\bar{y}, 1+\bar{x}^2\bar{y} \right\rangle.
$$

\end{enumerate}
\end{exa}

The generating set produced by Algorithm~\ref{alg:unit_group_gens} is in general not minimal. But we can compute
the exponent lattice of these generators to identify redundant ones. Furthermore, the isomorphism type of the
unit group can be determined by computing the Smith normal form of the exponent lattice.

\begin{cor}{\bf (Computing the Isomorphism Type of the Unit Group)}\label{corollary:type}\,\\
Let $R = P/I$ be a finite $\mathbb{Z}$-algebra. Consider the following sequence of instructions.

\begin{algorithmic}[1]
   \STATE Using Algorithm~\ref{alg:unit_group_gens}, compute generators $g_1, \dots, g_k$ of $R^\times$.
   \STATE Using Algorithm~\ref{alg:exp_lattice_z-algebra}, compute generators $v_1, \dots, v_m \in \mathbb{Z}^k$
          which generate the exponent lattice of $(g_1, \dots, g_k)$ in $R$.
   \STATE Form the matrix whose rows are given by $v_1, \dots, v_m$, and compute its Smith normal form~$S$.
   \STATE Let $r$ be the number of diagonal entries of $S$ equal to zero, and let $k_1, \dots, k_u$ be the
          non-zero diagonal entries of $S$.
\RETURN $r$ and $k_1, \dots, k_u$.
\end{algorithmic}
This is an algorithm which computes the rank and the invariant factors of the group~$R^\times$.
\end{cor}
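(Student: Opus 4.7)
The plan is to realize $R^\times$ as the cokernel of a matrix presentation and then apply the Smith normal form to read off its invariant factors, in complete analogy with the way the abelian-group structure of $R^+$ is extracted in the preliminaries.

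First, I would observe that line~1 returns a generating system $g_1, \dots, g_k$ of $R^\times$ by the correctness of Algorithm~\ref{alg:unit_group_gens}. Sending the standard basis vector $e_i \in \mathbb{Z}^k$ to $g_i$ yields a surjective group homomorphism
\[
\phi \colon \mathbb{Z}^k \longrightarrow R^\times, \qquad (a_1, \dots, a_k) \longmapsto g_1^{a_1} \cdots g_k^{a_k}.
\]
By the very definition of the exponent lattice, $\ker(\phi)$ is precisely the lattice $\Lambda$ produced by Algorithm~\ref{alg:exp_lattice_z-algebra} in line~2, so the first isomorphism theorem yields $R^\times \cong \mathbb{Z}^k/\Lambda$.

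Next, I would read off the structure of $\mathbb{Z}^k/\Lambda$ from the Smith form. Let $V$ denote the $m \times k$ integer matrix whose rows are $v_1, \dots, v_m$, and write $S = U V W$ with unimodular matrices $U$ and $W$. The columns of $W^{-1}$ constitute a new $\mathbb{Z}$-basis of $\mathbb{Z}^k$ in which $\Lambda$ is generated by the rows of $S$, so
\[
R^\times \;\cong\; \mathbb{Z}^k/\Lambda \;\cong\; \bigoplus_{i} \mathbb{Z}/s_i\mathbb{Z},
\]
where the $s_i$ are the diagonal entries of~$S$ and $\mathbb{Z}/0\mathbb{Z}$ is interpreted as $\mathbb{Z}$. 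By the divisibility chain guaranteed by the Smith normal form, the nonzero entries $s_i$ form a chain $k_1 \mid k_2 \mid \cdots \mid k_u$ and are therefore, after discarding any trivial summands with $s_i = 1$, precisely the invariant factors of the torsion subgroup of $R^\times$, while the number $r$ of zero diagonal entries equals its free rank. This is exactly the data extracted in lines~4--5.

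The main obstacle is not genuinely mathematical but a small bookkeeping issue that has to be handled when $m < k$: in this case $S$ has only $m$ diagonal positions, and the $k - m$ coordinates lacking a diagonal entry silently contribute additional free summands~$\mathbb{Z}$ that a naive count of ``diagonal zeros'' would miss. The cleanest remedy, which I would apply before invoking the Smith form in line~3, is to pad~$V$ with $\max(0, k - m)$ zero rows so that $S$ becomes a square $k \times k$ diagonal matrix in which every diagonal position is genuinely counted. With this normalisation the correctness of lines~3--5 is immediate, and the overall correctness of the algorithm reduces to the already-established correctness of Algorithms~\ref{alg:unit_group_gens} and~\ref{alg:exp_lattice_z-algebra}.
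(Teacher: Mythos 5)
Your proof is correct and is exactly the intended argument (the paper in fact states this corollary without any proof): the surjection $\mathbb{Z}^k \to R^\times$ has kernel equal to the exponent lattice, so $R^\times \cong \mathbb{Z}^k/\Lambda$ and the Smith normal form reads off the rank and invariant factors, in direct analogy with the treatment of $R^+$ in Section~2. Your observation about the case $m<k$ is a genuine and worthwhile refinement: as the algorithm is literally stated, a non-square $S$ (e.g.\ the lattice $\langle (0,6,0),(0,0,3)\rangle \subseteq \mathbb{Z}^3$) would make the count of zero diagonal entries undercount the rank, and padding $V$ with zero rows (equivalently, taking $r = k - \operatorname{rank} V$) fixes this.
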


Algorithm~\ref{alg:unit_group_gens} for computing generators of the unit group of a finite $\mathbb{Z}$-algebra
raises the natural question whether it can be extended to finitely generated $\mathbb{Z}$-algebras which are not
necessarily finite $\mathbb{Z}$-modules.
The unit group of such rings is finitely generated if and only if the Jacobson radical is finitely generated as
an additive group (see~\cite[Thm.~1]{bass1974introduction}). In particular, the unit group of a 
finitely generated, reduced $\mathbb{Z}$-algebra is a finitely generated group. 
However, the previous results cannot be directly applied to
compute a set of generators. For example, in the integral case, the algebra no longer needs to be a finite field or
an order in a number field. To the best of our knowledge, there exist no algorithms which compute generators of the
unit group in this case. We leave this problem for future research.

\end{document}